\newcommand{\ds}{\displaystyle}
\newcommand{\sat}{\mbox{sat}}
\newcommand{\std}{\operatorname{std}}
\newcommand{\js}[1]{{#1}}
\newcommand{\ar}[1]{{#1}}
\renewcommand{\d}{\mathop{}\!\mathrm{d}}
\journalname{Bioprocess and Biosystems Enginnering}
\begin{document}

\title{A method for the reconstruction of unknown non-monotonic growth functions in the chemostat
\thanks{JS' research is supported by the EPSRC grant EP/J010820/1.}
%Grants or other notes
%about the article that should go on the front page should be
%placed here. General acknowledgments should be placed at the end of the article.}
}

%\subtitle{Do you have a subtitle?\\ If so, write it here}
%\titlerunning{Short form of title}        % if too long for running head

\author{Jan Sieber \and Alain Rapaport \and Serafim Rodrigues \and Mathieu Desroches}

%\authorrunning{Short form of author list} % if too long for running head

\institute{J. Sieber \at
               Univ. of Exeter, U.K.
              \email{J.Sieber@exeter.ac.uk}           %  \\
%             \emph{Present address:} of F. Author  %  if needed
           \and
           A. Rapaport \at
              UMR INRA/SupAgro MISTEA and EPI INRA/INRIA MODEMIC,
              Montpellier, France\\
              \email{rapaport@supagro.inra.fr}           %  \\
%             \emph{Present address:} of F. Author  %  if needed
           \and
           S. Rodrigues \at
              CN-CR, Univ. of Plymouth, U.K.\\
              \email{serafim.rodrigues@plymouth.ac.uk}           %  \\
%             \emph{Present address:} of F. Author  %  if needed
           \and
           M. Desroches \at
              EPI SISYPHE, INRIA Rocquencourt, France\\
              \email{Mathieu.Desroches@inria.fr}           %  \\
%             \emph{Present address:} of F. Author  %  if needed
}

\date{Received: date / Accepted: date}
% The correct dates will be entered by the editor

\titlerunning{Reconstruction of unknown non-monotonic growth functions}
\maketitle

\begin{abstract}
  We propose an adaptive control law that allows one to identify
  unstable steady states of the open-loop system in the single-species
  chemostat model without the knowledge of the growth function. We
  then show how one can use this control law to trace out
  (reconstruct) the whole graph of the growth function. The process of
  tracing out the graph can be performed either continuously or
  step-wise. We present and compare both approaches.  Even in the case
  of two species in competition, which is not directly accessible with
  our approach due to lack of controllability, feedback control
  improves identifiability of the non-dominant growth rate.
  \keywords{Chemostat, growth, identification, competition, slow-fast
    systems, numerical continuation}
% \PACS{PACS code1 \and PACS code2 \and more}
% \subclass{MSC code1 \and MSC code2 \and more}
\end{abstract}

\section{Introduction}

We recall the classical chemostat model \cite{SW95} for a single
species (biomass $b$) consuming a substrate (mass $s$):

\begin{equation}
\label{chemostat}
\left\{\begin{array}{lll}
\dot s & = & -\mu(s)b+D(s_\mathrm{in}-s)\\
\dot b & = & \mu(s)b-Db
\end{array}\right.
\end{equation}
where the \emph{dilution rate} $D$ (the \emph{input}) is the
manipulated variable, which takes values in a bounded positive
interval $[D_{\min},D_{\max}]$, and $\mu(\cdot)$ is a non-negative
Lipschitz continuous function with $\mu(0)=0$
.

We consider here the following scenario: the function $\mu(\cdot)$ is
unknown and possibly non-monotonic. Our objective is to reconstruct
the graph of the function $\mu(\cdot)$ on the domain
$(0,s_\mathrm{in})$ by varying the input $D$ in time. On-line
measurements are only available for the variable $s$ (that is, $s$ is
the \emph{output}). This setup is realistic for experimental
investigations such as in \cite{AHSA03}, however, demonstrations in
this paper are based entirely on simulations of models such as
system~\eqref{chemostat}.
The present paper analyzes and expands the ideas initially
proposed by the authors in the conference paper \cite{SRRD12}.

\noindent {\em Remark:} Using model~\eqref{chemostat} tacitly assumes
that the yield coefficient of the bio-conversion is known. This is why
$\mu(s)b$ appears with the same pre-factor $1$ (once positive, and
once negative) in both equations of \eqref{chemostat} without loss of
generality.

The problem of kinetics estimation in biological and biochemical
models has been widely addressed in the literature
(\cite{AW78,H82,HR82,R82,DB84,DB86,DP88,BD90,PM90,LF91,BSSR94,VK96,DP97,VCV97,KE98,VCV98,PFFD00,D03}), either as a parameter
estimation problem (one chooses a priori an analytical expression of
the function $\mu(\cdot)$), or as an on-line estimation of the
kinetics (one aims at determining $\mu(s(t))$ at the current time
$t$).  The theoretical identifiability of the graph of $\mu(\cdot)$
has been thoroughly studied in \cite{BG03}. In this paper, a practical
method has been proposed to reconstruct the graph of $\mu(\cdot)$,
based on a Kalman observer under the approximation that the function
$z(t)=\mu(s(t))b(t)$ has a third time derivative equal to zero.  

Here, we propose a different method that does not make any
approximation of the dynamics. Our method exploits that it is
sufficient to find the complete branch of equilibria of system
\eqref{chemostat} to identify the graph $\mu(\cdot)$. This reduces the
system identification problem to a combination of two problems:
finding the equilibria (a root-finding problem) and stabilizing them
(a feedback control problem). Both of the latter two problems are in
theory easily solvable with standard methods as we will explain and
illustrate in
Sections~\ref{section-basic-feedback}--\ref{sec:hybrid}. The most
difficult obstacle in practice is the implementation of a real-time
feedback loop measuring $s$ and adapting $D$ with sufficient accuracy.

When the growth function is monotonic, a common way to reconstruct
points on the graph of the growth function $\mu(\cdot)$ is to design a
series of experiments fixing the dilution rate $D$ with different
values and wait until the system settles to a steady state
$(s^{\star},b^{\star})$ \cite{BD90}. As long as $D$ is less than
$\mu(s_\mathrm{in})$, it is well known that the dynamics converges to a
unique positive equilibrium that satisfies $\mu(s^{\star})=D$ (see for
instance \cite{SW95}). This technique requires the steady state to be stable
in open loop, and consequently cannot reconstruct any part of the
graph of a function $\mu(\cdot)$ where $\mu$ is non-increasing (such
as the example shown schematically in Figure~\ref{fig:stability}). 
Furthermore, the global convergence of this method is not satisfied 
in case of bi-stability, which is present in \eqref{chemostat}, 
with non-monotonic growth functions $\mu$ (see again \cite{SW95}).
\begin{figure}[h]
  \includegraphics[width=0.4\columnwidth]{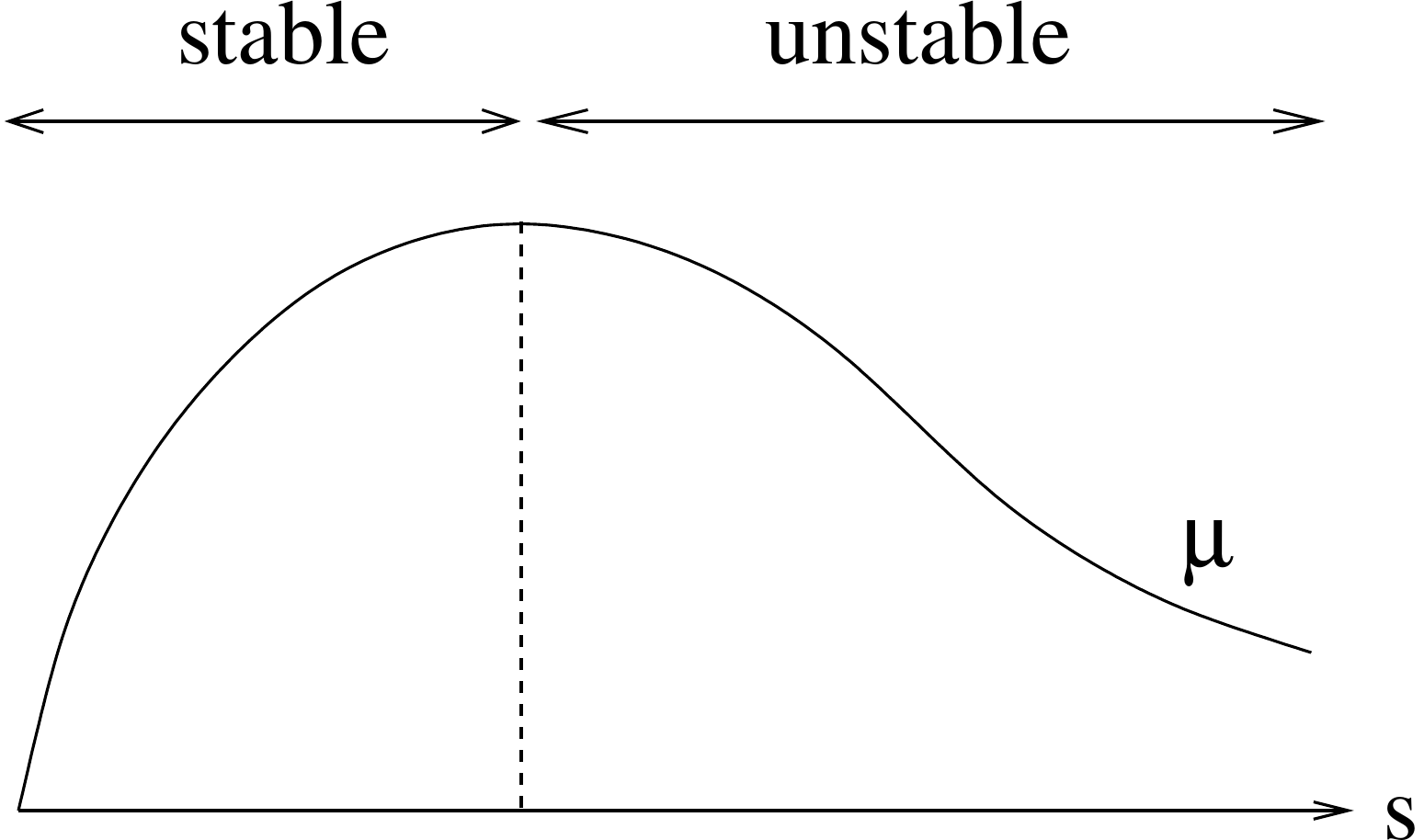}
  \caption{Domains of stability and instability in open-loop}
  \label{fig:stability}
\end{figure}

An alternative approach is to fix a value of
$s$, say $\bar s$, and design an adaptive control law $D(\cdot)$ that
stabilizes the system about the steady state $(\bar s,s_\mathrm{in}-\bar s)$,
with the value of $D$ converging to $\mu(\bar s)$.
Several adaptive control laws have been proposed in the literature for
this problem. Nonlinear feedbacks require the knowledge of the growth
function $\mu(\cdot)$, such as linearizing controls \cite{BD90,KH97}
Extensions that are robust with respect to uncertainty on $\mu(\cdot)$
have been proposed \cite{RH02} but do not provide the precise
reconstruction of $\mu(\bar s)$.  Several nonlinear PI based
controllers, that do not require the precise knowledge of
$\mu(\cdot)$, have been also proposed \cite{RC94,SC99}, but saturation
and windup is often an issue (see \cite{JBV99,KD99} in similar
frameworks).  In \cite{AHSA03}, a dynamical output feedback has been
proposed to globally stabilize such dynamics without the knowledge of
$\mu(\cdot)$ and under the constraint $D\in[D_{\min},D_{\max}]$, but
it requires the growth function to be monotonic. More recently, a
saturated PI controller coupled with an observer, dedicated to the
non-monotonic case, have been proposed to stabilize the dynamics about
a nominal point that maximizes the biomass production \cite{SAL12}.

\ar{As the exploration of the unstable part of an unknown non-monotonic
growth function requires an adaptive feedback, we propose in the
present work to take advantage of an adaptation scheme for exploring
(at least) a part of the graph instead of a limited number of set-points.}
We first consider that it can be useful to introduce a
feedback control loop into \eqref{chemostat} to identify the growth
function $\mu$ of the \emph{open-loop} system \eqref{chemostat} (that
is, \eqref{chemostat} with constant input $D$). The feedback control law is
initially a simple saturated proportionate controller:
\begin{equation}
  \label{simple-feedback}
  D(s,\bar D,\bar s)=\sat_{[D_{\min},D_{\max}]}
  \left(\bar D - G_{1}(s-\bar s)\right)\mbox{,}
\end{equation}
where $\bar D$ and $\bar s$ are reference values, and $G_1>0$ is the
linear control gain.  To ensure realistic values for the input $D$,
feedback law \eqref{simple-feedback} encloses the linear feedback rule
into the saturation function
\begin{displaymath}
  \sat_{[D_{\min},D_{\max}]}(x)=
  \begin{cases}
    D_{\max} &\mbox{if $x> D_{\max}$,}\\
    x &\mbox{if $x\in[D_{\min}, D_{\max}]$,}\\
    D_{\min} &\mbox{if $x< D_{\min}$,}
  \end{cases}
\end{displaymath}
where the limits $D_{\min}$ and $D_{\max}$ are the extreme dilution
rates that can be achieved experimentally. In sections
\ref{section-adaptive} and \ref{sec:hybrid} we will then explore
adaptation rules for the reference values $(\bar s,\bar D)$ which
ensure that asymptotically for $t\to\infty$ the input satisfies
\begin{equation}
  \label{non-invasive1}
  D(s,\bar D,\bar s)=\bar D \,
\end{equation}
or, equivalently, the output satisfies
\begin{equation}
\label{non-invasive2}
  s=\bar s\mbox{.}
\end{equation}
\js{If \eqref{non-invasive2} is satisfied in the limit $t\to\infty$
  then the controlled input $D$ in \eqref{simple-feedback} equals the
  open-loop value $\bar D$ again (the feedback term $G_1(s-\bar s)$ in
  \eqref{simple-feedback} vanishes). We call feedback control that
  vanishes asymptotically \emph{non-invasive}.} The result is a new
adaptive control law that stabilizes the dynamics about any desired
equilibrium point without requiring a priori knowledge of its
location, and whatever is the monotonicity of the growth function. One
requirement on the adaptive law is that it should work uniformly well
around a local maximum of $\mu$ (non-invasive feedback laws such as
\js{(wash-out)} filtered feedback \cite{AWC94} or time-delayed
feedback \cite{P92} do not achieve this).

We note that our adaptation rules will be much simpler than classical
adaptive control laws \cite{BD90,AW94}.  Usually, adaptive control
aims to achieve a desired output regardless of changes in the
underlying system. We only adapt the reference values to make the
control input vanish and find branches of equilibria and bifurcations
of the underlying system, similar to numerical continuation
\cite{AG03}. While classical adaptive control requires system
identification (an inverse problem) at some stage, our adaptation
solves a root-finding problem, which is simpler.

Throughout our paper we assume that the output $s$ can be sampled
and the input $D$ can be adjusted in quasi real-time. If the sampling
period $T$ is not negligible, the approach presented here can still be
applied. However, one then faces the problem that feedback
stabilization of an unstable equilibrium at output $s$ becomes
sensitive with amplification factor $\sim\exp(-\mu'(s)T)$ to
disturbances (note that $\mu'(s)<0$ for unstable equilibria).

We show in Sections~\ref{section_continuation} and \ref{sec:hybrid}
that the feedback law \eqref{simple-feedback} can be combined with an
adaption rule for $(\bar s,\bar D)$ to reconstruct the graph of the
growth function, even in the case of non-monotonic growth
functions. Section~\ref{section_continuation} presents a dynamic
adaption, whereas Section~\ref{sec:hybrid} introduces a step-wise
adaptation. In Section~\ref{sec:twospecies} we investigate the case of
two species that compete for the same common substrate.

%%%%%%%%%%%%%%%%%%%%%%%%%%%%%%%%%%%%%%%%%%%%%%%%%%%%%%%%%%%%%%%%%%%%%%%%%%%%%%%%
\section{Global stability of the simple feedback law
  \eqref{simple-feedback}}
\label{section-basic-feedback}

\begin{figure}[t]
  \includegraphics[width=0.6\columnwidth]{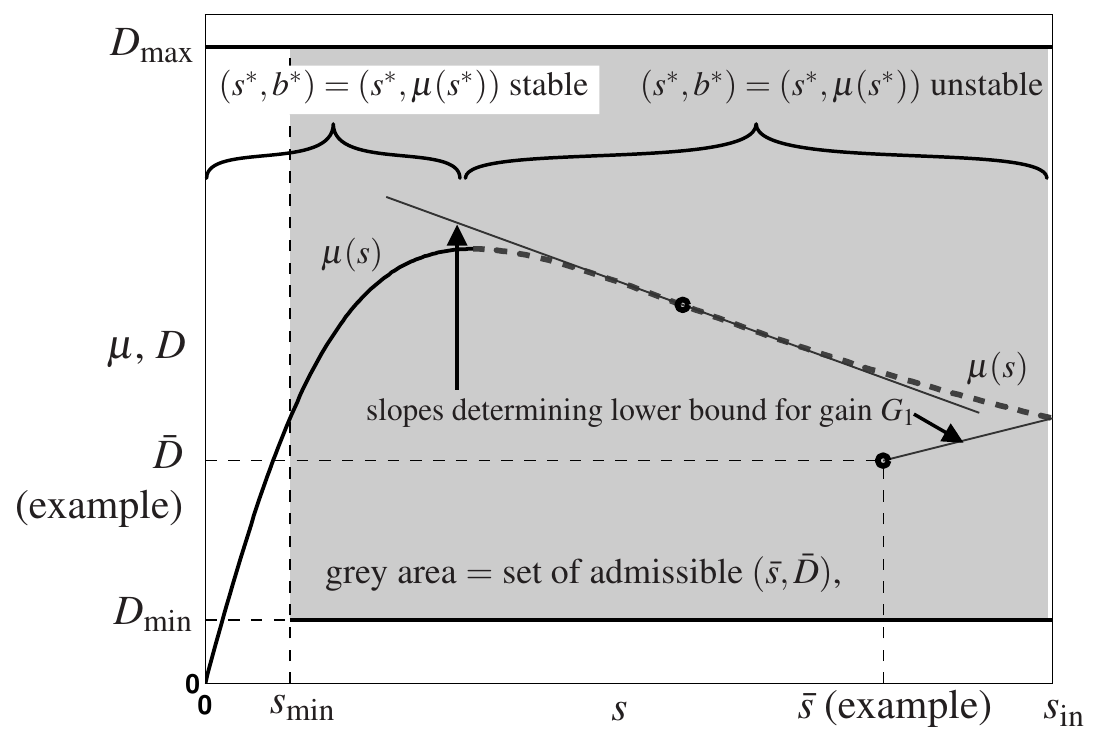}
  \caption{Sketch illustrating shape of growth function $\mu$ and set of admissible reference values}
  \label{fig:sketch}
\end{figure}
Let us first prove that the feedback law \eqref{simple-feedback} is,
within reasonable limits, globally stabilizing. Suppose that we choose
the reference value $\bar s$ from an interval
$[s_{\min},s_\mathrm{in})\subset(0,s_\mathrm{in})$, and that the
limits on the input cover the growth function $\mu$ on this interval:
\begin{align}
  \label{eq:muboundlow}
  D_{\min}&< \mu(s)\mbox{\quad for all $s\in[s_{\min},s_\mathrm{in}]$,}\\
  \label{eq:muboundup}
  D_{\max}&>\mu(s)\mbox{\quad for all $s\in[0,s_\mathrm{in}]$.}
\end{align}
These conditions mean that the graph of $\mu$ does not cross the thick
parts of the horizontal lines $D_{\min}$ and $D_{\max}$ bounding the
grey area in Figure~\ref{fig:sketch} from below and above.

\begin{proposition}
\label{proposition1}
Suppose that the reference values $(\bar s,\bar D)$ in feedback law
\eqref{simple-feedback},
\begin{displaymath}
  D(t)=\sat_{[D_{\min},D_{\max}]} \left(\bar D
  - G_{1}(s(t)-\bar s)\right)\mbox{,}
\end{displaymath}
are chosen from the rectangle
$[s_{\min},s_\mathrm{in})\times[D_{\min},D_{\max}]$, that the growth
function $\mu$ satisfies \eqref{eq:muboundlow}--\eqref{eq:muboundup},
and that the gain $G_1$ is chosen sufficiently large, that is,
\begin{align}
  \label{eq:g1boundmuprime}
  G_1&>-\min_{s\in[0,s_\mathrm{in}]}\mu'(s)\mbox{, and}\\
  \label{eq:g1boundsin}
  G_1&>-\left[\frac{\mu(s_\mathrm{in})-
      \bar D}{s_\mathrm{in}-\bar s}\right]\mbox{.}
\end{align}
Then the controlled system~\eqref{chemostat} with $D=D(s,\bar D,\bar
s)$ has a stable equilibrium
$(s_\mathrm{eq},b_\mathrm{eq})\in[0,s_\mathrm{in})\times(0,\infty)$,
which attracts all initial conditions
$(s(0),b(0))\in[0,s_\mathrm{in})\times(0,\infty)$.
\end{proposition}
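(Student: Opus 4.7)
My plan is to exploit the fact that the chemostat has an exponentially attracting one-dimensional invariant manifold, reduce the problem to the dynamics on that manifold, and then analyze a scalar root-finding / sign problem for the map $f(s)=D(s,\bar D,\bar s)-\mu(s)$.

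First I would introduce $z = s+b-s_\mathrm{in}$ and observe that adding the two equations of \eqref{chemostat} yields $\dot z = -D(t)z$. Since the saturation forces $D(t)\ge D_{\min}>0$, every trajectory satisfies $|z(t)|\le|z(0)|e^{-D_{\min}t}$, so $s+b\to s_\mathrm{in}$ exponentially. I would also note that the line $\{b=0\}$ is invariant and, from $\dot b=(\mu(s)-D)b$, any trajectory starting with $b(0)>0$ keeps $b(t)>0$, while $\dot s=Ds_\mathrm{in}>0$ on $\{s=0\}$ keeps $s(t)\ge 0$. This confines the trajectories to a compact forward-invariant set and reduces global stability of the 2D system to (i) global stability of the reduced dynamics on the manifold $\{s+b=s_\mathrm{in}\}$ and (ii) a standard asymptotically-autonomous argument to lift the 1D conclusion back to 2D.

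Second, on the invariant manifold $b=s_\mathrm{in}-s$ the $s$-equation reads
\begin{equation*}
\dot s = (s_\mathrm{in}-s)\bigl(D(s,\bar D,\bar s)-\mu(s)\bigr) = (s_\mathrm{in}-s)f(s).
\end{equation*}
So I need to analyse the sign and zeros of $f$ on $[0,s_\mathrm{in}]$. I would split the $s$-axis into the three regions determined by the saturation of \eqref{simple-feedback}: the upper-saturated region $\{D=D_{\max}\}$, the linear region, and the lower-saturated region $\{D=D_{\min}\}$. On the upper-saturated region \eqref{eq:muboundup} gives $f>0$; on the linear region the gain condition \eqref{eq:g1boundmuprime} gives $f'(s)=-G_1-\mu'(s)<0$, so $f$ is strictly decreasing; on the lower-saturated region one first checks that the threshold $\bar s+(\bar D-D_{\min})/G_1$ lies in $[s_{\min},s_\mathrm{in}]$ (using $\bar s\ge s_{\min}$ and $\bar D\ge D_{\min}$), then \eqref{eq:muboundlow} gives $f<0$. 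At the endpoints, $f(0)=D(0)-\mu(0)>0$ since $D(0)\ge D_{\min}>0=\mu(0)$, and condition \eqref{eq:g1boundsin} together with \eqref{eq:muboundlow} gives $D(s_\mathrm{in})<\mu(s_\mathrm{in})$, i.e.\ $f(s_\mathrm{in})<0$. Combining these, $f$ has exactly one zero $s_\mathrm{eq}\in(0,s_\mathrm{in})$, which necessarily lies in the linear region, with $f>0$ left of it and $f<0$ right of it.

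Third, on the manifold the sign of $\dot s$ coincides with the sign of $f(s)$ for $s<s_\mathrm{in}$, so every trajectory in $[0,s_\mathrm{in})$ converges monotonically to $s_\mathrm{eq}$, and the linearisation $(s_\mathrm{in}-s_\mathrm{eq})f'(s_\mathrm{eq})<0$ confirms hyperbolic stability. Finally, to transfer this to the full 2D system I would rewrite the dynamics in $(\sigma,\zeta):=(s,s+b-s_\mathrm{in})$ as
\begin{equation*}
\dot\sigma=(s_\mathrm{in}-\sigma)f(\sigma)-\mu(\sigma)\zeta,\qquad \dot\zeta=-D(\sigma)\zeta,
\end{equation*}
and treat the $\sigma$-equation as a vanishing-perturbation of the reduced equation: $\zeta$ decays exponentially and the unperturbed 1D system has a globally attracting hyperbolic equilibrium, so standard results on asymptotically autonomous scalar ODEs (or a direct Lyapunov estimate using the monotone structure of $f$) yield $\sigma(t)\to s_\mathrm{eq}$ and hence $b(t)\to s_\mathrm{in}-s_\mathrm{eq}>0$ for every $(s(0),b(0))\in[0,s_\mathrm{in})\times(0,\infty)$. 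The main obstacle I expect is the case analysis in step~two: keeping track of which saturation branch each region of $s$ lies in, and checking that the bound \eqref{eq:g1boundsin} really rules out the pathological case where $f$ could be non-negative near $s_\mathrm{in}$; everything else is routine manipulation of a triangular planar system.
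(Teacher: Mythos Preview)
Your proposal is correct and follows the same route as the paper: reduce to the invariant line $\{s+b=s_\mathrm{in}\}$ via the exponential decay of $z=s+b-s_\mathrm{in}$, then analyse the sign of $f(s)=D(s,\bar D,\bar s)-\mu(s)$ on $[0,s_\mathrm{in}]$ to obtain a unique stable zero in the unsaturated region (the paper rules out equilibria on the saturation branches by contradiction rather than your direct region-by-region sign check, and is less explicit than you about the asymptotically-autonomous lift back to the planar system). Your only imprecision is the claim that the lower-saturation threshold $\bar s+(\bar D-D_{\min})/G_1$ lies in $[s_{\min},s_\mathrm{in}]$: the upper bound can fail, but then the lower-saturated region is simply empty on $[0,s_\mathrm{in}]$ and your argument only simplifies.
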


\begin{proof} If $D>0$, and the growth function $\mu$
satisfies $\mu(0)=0$ and, for $s>0$, $\mu(s)>0$ then the set
\begin{displaymath}
R=\{(s,b): s\in[0,s_\mathrm{in}), b>0\}
\end{displaymath}
is positively invariant (that is, trajectories starting in $R$ will
stay in $R$ for all positive times). Furthermore, all trajectories
starting in $R$ approach the subspace (called stochiometric set in
\cite{SAL12})
\begin{displaymath}
T=\{(s,b)\in R: s+b=s_\mathrm{in}\}
\end{displaymath}
with rate at least $D_{\min}$ forward in
time. This implies that it is sufficient to check if all trajectories
in $T$ converge to a unique equilibrium. On $T$ the equation of motion
can be expressed as a differential equation for $s$ only:
\begin{equation}
  \label{eq:invsubspace}
  \dot s=[D(s,\bar D,\bar s)-\mu(s)][s_\mathrm{in}-s]=
  \left[\sat_{[D_{\min},D_{\max}]}
    \left(\bar D - G_{1}(s-\bar s)\right)-\mu(s)\right][s_\mathrm{in}-s]
  \mbox{.}
\end{equation}
First, let us check that the equilibrium at $s=s_\mathrm{in}$ is
unstable. The term $-G_1(s_\mathrm{in}-\bar s)$ is negative such that
$\bar D-G_1(s_\mathrm{in}-\bar s)<D_{\max}$ for all admissible $\bar
D$.  Assumption \eqref{eq:g1boundsin} guarantees that $\bar
D-G_1(s_\mathrm{in}-\bar s)<\mu(s_\mathrm{in})$. Assumption
\eqref{eq:muboundlow} guarantees that also
$D_{\min}<\mu(s_\mathrm{in})$. Hence,
\begin{displaymath}
D(s_\mathrm{in},\bar D,\bar s)-\mu(s_\mathrm{in})<0
\end{displaymath}
for all admissible $(\bar s, \bar D)$. Thus, the prefactor of
$s_\mathrm{in}-s$ in \eqref{eq:invsubspace} is negative such that the
equilibrium at $s_\mathrm{in}$ is unstable for all admissible $(\bar
s, \bar D)$.

Since $\dot s>0$ at $s=0$, there must be other equilibria of
\eqref{eq:invsubspace} in $(0,s_\mathrm{in})$, which are given as
solutions $s_\mathrm{eq}$ of $D(s_\mathrm{eq},\bar D, \bar
s)=\mu(s_\mathrm{eq})$.  Now let us check indirectly that none of the
equilibria can satisfy $D_{\min}=\mu(s_\mathrm{eq})$. 

Assume that \eqref{eq:invsubspace} had an equilibrium $s_\mathrm{eq}$
with $D_{\min}=\mu(s_\mathrm{eq})$. Then $s_\mathrm{eq}$ has to be
less than $s_{\min}$ due to assumption \eqref{eq:muboundlow}. However,
if $s_\mathrm{eq}<s_{\min}$, then $\bar D-G_1(s_\mathrm{eq}-\bar
s)>\bar D\geq D_{\min}$ for all admissible $(\bar s,\bar D)$. Hence
$D(s_\mathrm{eq},\bar D,\bar s)>D_{\min}$ (recall that
$D_{\min}=\mu(s_\mathrm{eq})$ by assumption of the indirect proof)
such that $D(s_\mathrm{eq},\bar D,\bar s)-\mu(s_\mathrm{eq})>0$, which
means that $s_\mathrm{eq}$ cannot be equilibrium, establishing the
contradiction.

Assumption \eqref{eq:muboundup} excludes that equilibria with
$\mu(s_\mathrm{eq})=D_{\max}$ exist, hence all remaining equilibria
$s_\mathrm{eq}\in(0,s_\mathrm{in})$ must satisfy
\begin{equation}\label{eq:seqdef:imp}
  \bar D-G_1(s_\mathrm{eq}-\bar s)=\mu(s_\mathrm{eq})\mbox{.}
\end{equation}
Condition \eqref{eq:g1boundmuprime} ensures that this equation has a
unique solution and that this solution corresponds to a stable
equilibrium (which must be in $(0,s_\mathrm{in})$ because the boundaries of 
$(0,s_\mathrm{in})$ are inflowing for \eqref{eq:invsubspace}).\qed
\end{proof}

Proposition \ref{proposition1} ensures that the output $s_\mathrm{eq}$ of the
controlled system~\eqref{chemostat} with \eqref{simple-feedback},
after transients have decayed, is a well-defined smooth function of
the parameters $(\bar s,\bar D)$ as long as $(\bar s,\bar D)$ are
chosen from $(s_{\min},s_{\max})\times(D_{\min},D_{\max})$. We express
this fact by using the bracket notation:
\begin{equation}\label{eq:seqdef}
s_\mathrm{eq}(\bar s,\bar D)=\lim_{t\to\infty}s(t)
\mbox{\quad where $s$ is output of \eqref{chemostat},\,\eqref{simple-feedback}.}
\end{equation}
The function $s_\mathrm{eq}$ can be evaluated at any admissible point
by setting the parameters $(\bar s,\bar D)$ in the definition
\eqref{simple-feedback} of the feedback rule, waiting until the
transients of \eqref{chemostat} have settled, and then reading off the
output $s$. Equilibria of the uncontrolled system can then, according
to \eqref{non-invasive2}, be found as roots of $s_\mathrm{eq}(\bar
s,\bar D)-\bar s$. More specifically, we know that, for any admissible
$\bar s$,
\begin{equation}
  \bar D=\mu(\bar s)\mbox{\quad if and only if $s_\mathrm{eq}(\bar s,\bar D)=\bar s$.}
  \label{eq:muident}
\end{equation}
Relation~\eqref{eq:muident} permits us to identify $\mu(\bar s)$ as
the unique root of $s_\mathrm{eq}(\bar s,\cdot)-\bar s$. Sections
\ref{section-adaptive}-\ref{sec:hybrid} will explore two
strategies to find this root for a range of admissible $\bar s$
efficiently.

%%%%%%%%%%%%%%%%%%%%%%%%%%%%%%%%%%%%%%%%%%%%%%%%%%%%%%%%%%%%%%%%%%%%%%%%%%%%%%%%
\section{An adaptive control scheme}
\label{section-adaptive}

The first strategy is a dynamic feedback that comes on top of the
feedback law \eqref{simple-feedback} for $D$. We treat $\bar D$ not as
a parameter but introduce an additional dynamical equation for $\bar
D$, achieving local convergence of the output $s$ to any reference
value $\bar s\in (0,s_\mathrm{in})$ without the knowledge of the
growth function $\mu$. Then the asymptotic value of $\bar D$ allows
one to reconstruct the value $\mu(\bar s)$.

\begin{proposition}
\label{proposition2}
Fix a number $\bar s \in (0,s_\mathrm{in})$ and take numbers $D_{\min}$, $D_{\max}$ that fulfill $0<D_{\min} < \mu(\bar s) < D_{\max}$.
Then the dynamical feedback law
\begin{equation}
\label{dyn-feedback}
\begin{split}
D(s,\bar D)&=\sat_{[D_{\min},D_{\max}]}\left(\bar D - G_{1}(s-\bar s)\right)\\
\frac{\d}{\d t}\bar D &= -G_{2}(s-\bar s)(\bar D-D_{\min})(D_{\max}-\bar D)
\end{split}
\end{equation}
exponentially stabilizes the system \eqref{chemostat} locally about
$(s,b)=(\bar s,s_\mathrm{in}-\bar s)$, for any positive constants
$(G_{1},G_{2})$ such that $G_{1}> -\mu^{\prime}(\bar s)$.  Furthermore
one has 
\[
\lim_{t\to+\infty}\bar D(t)=\mu(\bar s)
\]
\end{proposition}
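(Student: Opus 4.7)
The plan is to verify that $(s,b,\bar D)=(\bar s,\,s_\mathrm{in}-\bar s,\,\mu(\bar s))$ is an equilibrium of the closed-loop system and then to establish local exponential stability by linearization. First, since $D_{\min}<\mu(\bar s)<D_{\max}$, the saturation in \eqref{dyn-feedback} is strictly inactive at $\bar D=\mu(\bar s)$, $s=\bar s$, so in a neighborhood of the target we may replace $D(s,\bar D)$ by the smooth expression $\bar D-G_1(s-\bar s)$. Plugging in shows $\dot s=\dot b=0$, and the $\bar D$-equation vanishes because of its $(s-\bar s)$ prefactor, confirming the equilibrium.

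Next I would exploit the invariant-manifold structure used in the proof of Proposition~\ref{proposition1}. A direct computation gives
\begin{equation*}
\frac{\d}{\d t}(s+b-s_\mathrm{in})=-D\,(s+b-s_\mathrm{in}),
\end{equation*}
so the stoichiometric set $\{s+b=s_\mathrm{in}\}$ is exactly invariant and attracting at rate $D\ge D_{\min}$. The full 3D linearization therefore splits into the contraction along $(1,1,0)$ with eigenvalue $-\mu(\bar s)<0$, and a reduced planar system in the variables $x=s-\bar s$ and $z=\bar D-\mu(\bar s)$ obtained by substituting $b=s_\mathrm{in}-s$.

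The reduced planar Jacobian at the equilibrium is straightforward to compute; writing $\alpha=s_\mathrm{in}-\bar s>0$ and using that the bracketed factor $[\bar D-G_1(s-\bar s)-\mu(s)]$ vanishes at the equilibrium, one obtains
\begin{equation*}
\tilde J=\begin{pmatrix}
-\bigl(G_1+\mu'(\bar s)\bigr)\alpha & \alpha \\[2pt]
-G_2\bigl(\mu(\bar s)-D_{\min}\bigr)\bigl(D_{\max}-\mu(\bar s)\bigr) & 0
\end{pmatrix}.
\end{equation*}
The hypothesis $G_1>-\mu'(\bar s)$ makes the trace negative, and the hypothesis $D_{\min}<\mu(\bar s)<D_{\max}$ together with $G_2>0$ and $\alpha>0$ makes the determinant positive. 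Hence both eigenvalues of $\tilde J$ have strictly negative real part. Combined with the third eigenvalue $-\mu(\bar s)$ from the stoichiometric direction, all three eigenvalues of the full Jacobian are in the left half-plane, and the Hartman--Grobman theorem yields local exponential stability of $(\bar s,s_\mathrm{in}-\bar s,\mu(\bar s))$; in particular $\bar D(t)\to\mu(\bar s)$ as $t\to+\infty$.

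I do not expect a serious obstacle here: the only subtlety is justifying that the saturation can be ignored throughout the local analysis, which follows from the strict inequalities $D_{\min}<\mu(\bar s)<D_{\max}$ and continuity, and checking that the invariant-manifold reduction is legitimate for the linearization (it is, because $\{s+b=s_\mathrm{in}\}$ is exactly invariant, not merely attracting, so the tangent and transverse directions genuinely decouple at the linear level).
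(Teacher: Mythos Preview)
Your proof is correct and follows essentially the same route as the paper: the paper also drops the saturation locally, changes to $(z,s,\bar D)$ with $z=s+b$ to peel off the eigenvalue $-\mu(\bar s)$, and then checks trace and determinant of the identical $2\times2$ block you call $\tilde J$. The only cosmetic difference is that the paper writes out the coordinate change explicitly rather than phrasing it as an invariant-manifold reduction.
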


{\em Remark.} Our adaptive control is in this case similar to a
classical PI controller. The quantity $\bar D$ is playing the role of
the I part, but staying bounded by construction.
\ar{It is also similar to gain-scheduling methods but here
  the parameter $\bar D$ is evolving continuously, as a state variable.}

\begin{proof} Locally about $s=\bar s$, the closed
loop system is equivalent to the three-dimensional dynamical system
\begin{displaymath}
  \begin{cases}
    \begin{aligned}
        \frac{\d}{\d t} s\ & = -\mu(s)b+(\bar D -G_{1}(s-\bar s))(s_\mathrm{in}-s)\\
        \frac{\d}{\d t} b\ & = \phantom{-}\mu(s)b-(\bar D -G_{1}(s-\bar s))b\\
        \frac{\d}{\d t}\bar D & =  -G_{2}(s-\bar s)(\bar
        D-D_{\min})(D_{\max}-\bar D)
  \end{aligned}
\end{cases}
\end{displaymath}
This system admits the unique positive equilibrium $E^{\star}=(\bar s, s_\mathrm{in}-\bar s,\mu(\bar s))$.\\ 
For simplicity, we write the dynamics in the variables $(z,s,\bar D)$ coordinates, where $z$ is defined as $z=s+b$:
\begin{displaymath}
  \begin{cases}
    \begin{aligned}
      \frac{\d}{\d t} z\ & = (\bar D -G_{1}(s-\bar s))(s_\mathrm{in}-z)\\
     \frac{\d}{\d t}  s\ & = -\mu(s)(z-s)+(\bar D -G_{1}(s-\bar s))(s_\mathrm{in}-s)\\
      \frac{\d}{\d t}\bar D  &=   -G_{2}(s-\bar s)(\bar D-D_{\min})(D_{\max}-\bar D)
    \end{aligned}
  \end{cases}
\end{displaymath}
The Jacobian matrix at $E^{\star}$ in these coordinates is
\[
\left(\begin{array}{ccc}
-\mu(\bar s) & 0 & 0\\
 -\mu(\bar s) & -(\mu^{\prime}(\bar s)+G_{1})(s_\mathrm{in}-\bar s) & s_\mathrm{in}-\bar s\\
0 & -G_{2}(\mu(\bar s)-D_{\min})(D_{\max}-\mu(\bar s)) & 0
\end{array}\right)
\]
Its eigenvalues are $\lambda_{1}=-\mu(\bar s)<0$ and $\lambda_{2}$, $\lambda_{3}$ as eigenvalues of the sub-matrix
\[
M=\left(\begin{array}{cc}
-(\mu^{\prime}(\bar s)+G_{1})(s_\mathrm{in}-\bar s) & s_\mathrm{in}-\bar s\\
-G_{2}(\mu(\bar s)-D_{\min})(D_{\max}-\mu(\bar s)) & 0
\end{array}\right)
\]
Then, one has
\[
\begin{array}{lll}
\mbox{det}(M) & = & G_{2}(\mu(\bar s)-D_{\min})(D_{\max}-\mu(\bar s))(s_\mathrm{in}-\bar s )\\
\mbox{tr}(M) & = & -(\mu^{\prime}(\bar s)+G_{1})(s_\mathrm{in}-\bar s)
\end{array}
\]
and concludes about the exponential stability of $E^{\star}$ when $G_{2}>0$ and $G_{1}>-\mu^{\prime}(\bar s)$.
Finally, one obtains from \eqref{dyn-feedback} that $D$ or $\bar D$
converges toward the unknown value $\mu(\bar s)$.\qed
\end{proof}

\begin{figure}[t]
\includegraphics[height=4cm]{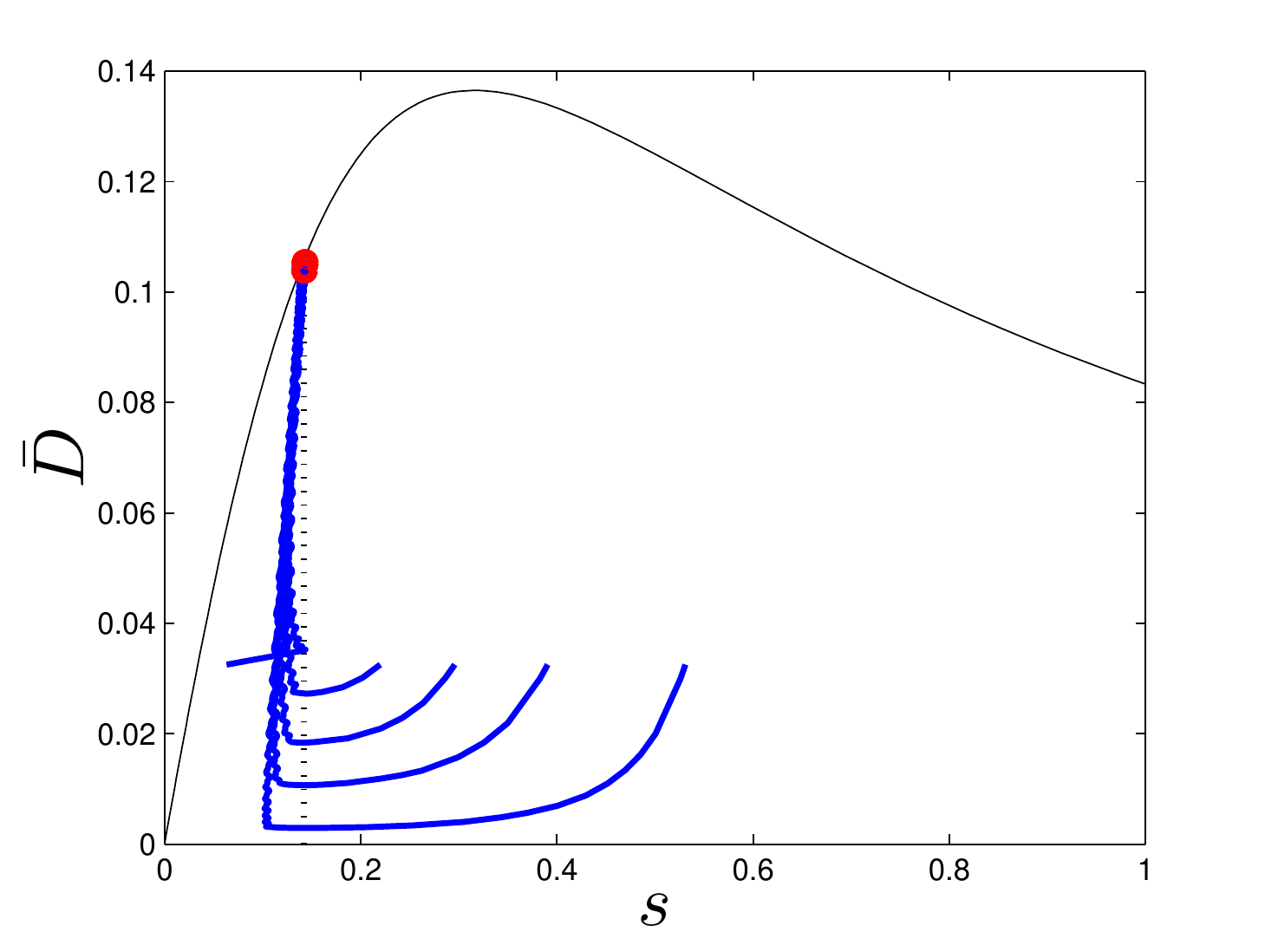}
\includegraphics[height=4cm]{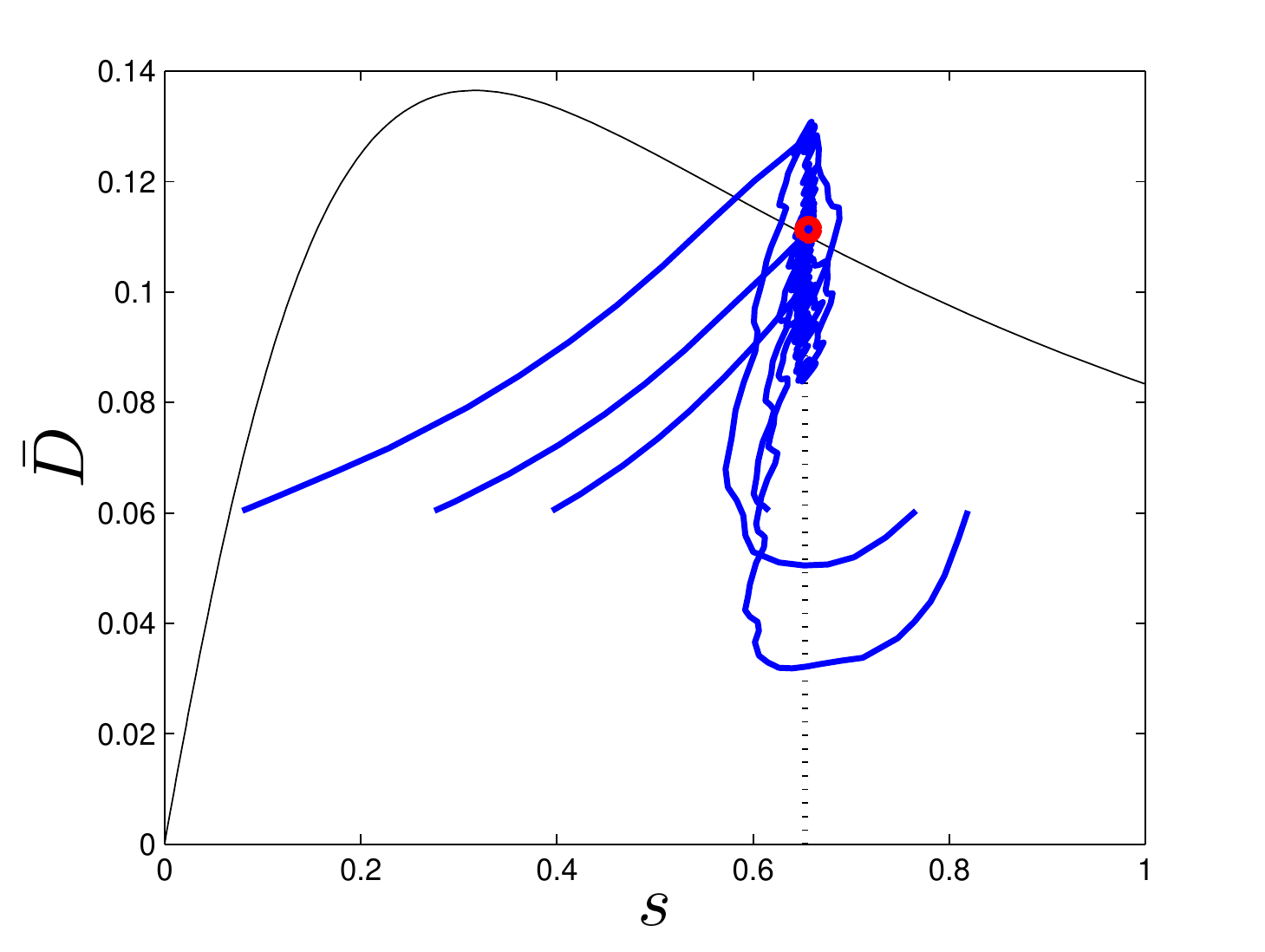}\\
\caption{\label{simu1} The behavior of feedback law
  \eqref{dyn-feedback} for two different values of $\bar s$ in the
  $(s,\bar D)$-projection. Parameters: $G_{1}=2$, $G_{2}=2$, output
  disturbance defined in \eqref{eq:disturbance}.}
\end{figure}

Note that the assumptions in Proposition \ref{proposition2} (for example, on the gain
$G_1$) are weaker than those of Proposition \ref{proposition1} as
Proposition \ref{proposition2}
is only concerned with local stability and a single reference value $\bar s$.

Figure \ref{simu1} demonstrates how control law \eqref{dyn-feedback}
stabilizes an equilibrium with output $\bar s$ for $\bar s$ in the
increasing (left panel) and decreasing (right panel) part of the
growth law $\mu$. For our single-species demonstration we choose the
non-monotonic Haldane function
\begin{equation}
\mu(s)=\frac{s}{1+s+10s^{2}}\label{eq:munonmon}
\end{equation}
and $s_\mathrm{in}=1$. \ar{Any other growth function could have been
chosen, under the requirements that it is Lipschitz continuous and
fulfill equations (\ref{eq:muboundlow}) and (\ref{eq:muboundup}).} 
To illustrate the effect of disturbances, we
super-impose a rapid oscillation onto the measurements of output $s$,
such that the output has the form
\begin{equation}
  \label{eq:disturbance}
  s_\mathrm{output}=s[1+\delta\cos(3t)\sin(t)]\mbox{,\quad where\quad}
  \delta=0.05
\end{equation}
\js{(other disturbances such as quasi-periodic or white-noise signals
  have been tested, getting similar results).}  The grey background
curve in Figure~\ref{simu1} shows $\mu(\cdot)$, which is clearly
non-monotonic on the domain $(0,s_\mathrm{in})$
($s_\mathrm{in}=1$). To show the robustness of the method, we we have
chosen $\bar D(0)$ slightly far from $\mu(s(0))$.

\section{Reconstruction of the growth function}
\label{section_continuation}

Now, we can trace out \js{any desired part of the graph} $\mu(\cdot)$
dynamically by letting $\bar s$ change slowly with time as solution of
the simple dynamics
\begin{equation}\label{eq:sdrift}
\frac{\d}{\d t}\bar s = \varepsilon \bar s(s_\mathrm{in}-\bar s)%\label{eq:sdrift}
\end{equation}
to explore the right part of the graph of $\mu(\cdot)$ when
$\varepsilon$ is a small non-negative number, and to explore the left
one when $\varepsilon$ is a small non-positive number.  During the
reconstruction of the graph, the gain $G_{1}$ has to be been chosen
uniformly large according to \eqref{eq:g1boundmuprime}. Figure
\ref{simu2} shows how the adaptation rule \eqref{eq:sdrift} together
with \eqref{dyn-feedback} reconstructs the entire graph of the growth
function. \js{The overall time it took to reach $\bar s=0.9$ is
  $t\approx3000$ in the dimensionless time units of \eqref{chemostat}.}
\begin{figure}[ht]
\includegraphics[height=4cm]{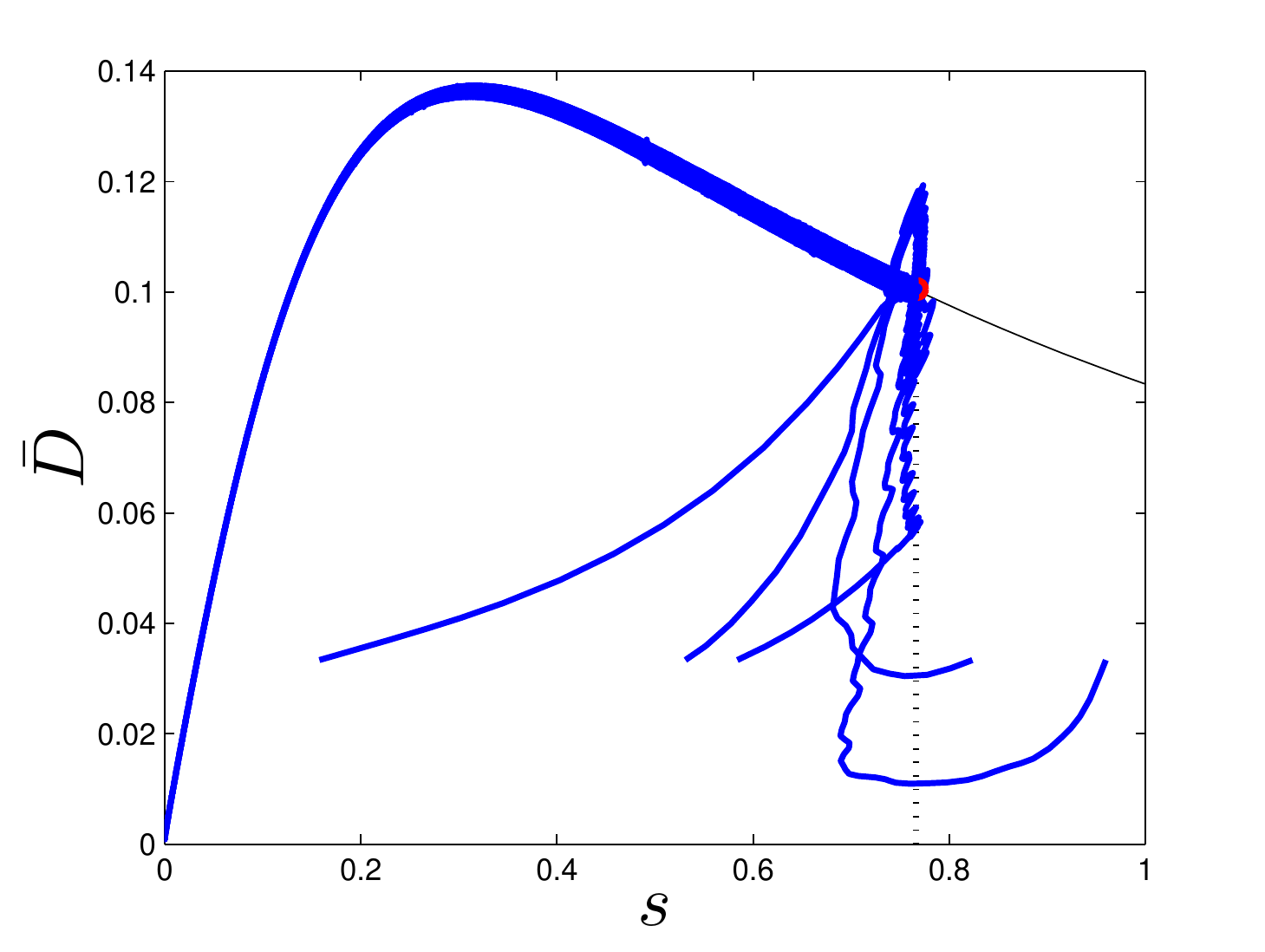}
\includegraphics[height=4cm]{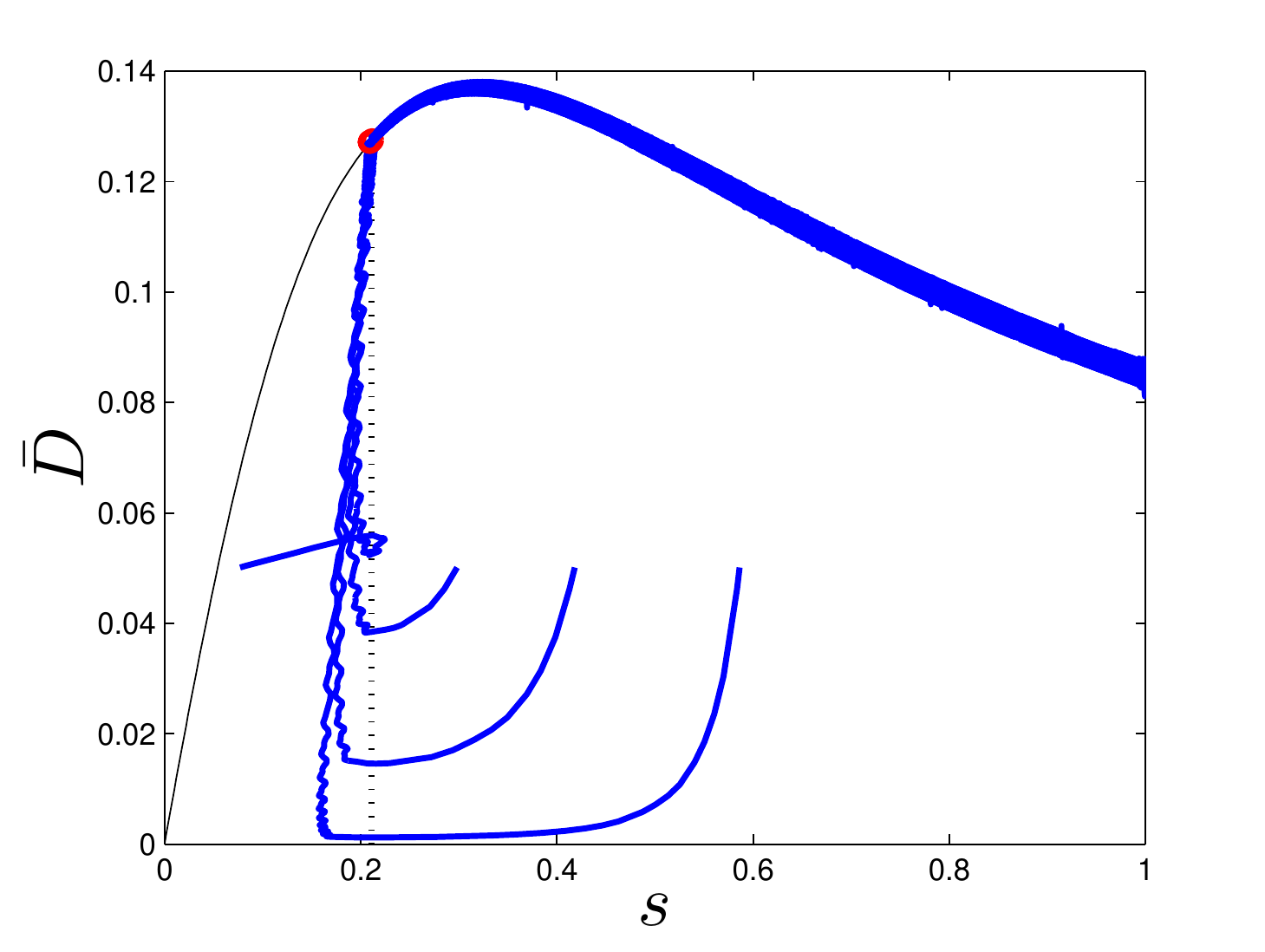}\\
\caption{\label{simu2} Dynamical adaptation using \eqref{eq:sdrift} to
  explore dynamically the left and right part of the graph of the
  unknown function $\mu(\cdot)$. Parameters: for the left panel,
  \js{$\varepsilon=+0.001$}, for the right panel, \js{$\varepsilon=-0.001$},
  otherwise identical to Fig.~\ref{simu1}.}
\end{figure}

\section{Step-wise adaptation of the reference values}
\label{sec:hybrid}
In this section, we propose an alternative to the continuous
adaptation of $\bar D$ and $\bar s$: we treat the root problem
$0=s_\mathrm{eq}(\bar s,\bar D)-\bar s$ with ordinary numerical
root-finders such as the Newton iteration. We present here an approach
that combines the two steps of the method (the adaptive control and
the continuation) in a step-wise framework.

\subsection{Adaptation using Newton iteration}
\label{sec:discnewton}

In an experimental setting one will have to adapt the numerical
methods to the lower accuracy of experimental outputs (see
\cite{SGNWK08} for a demonstration in a mechanical experiment) but for
this paper we restrict ourselves to a numerical demonstration. In the
single-species chemostat one profits from the knowledge of an
approximate derivative of $s_\mathrm{eq}$ with respect to $\bar D$,
making the Newton iteration more efficient. Suppose, we plan to
identify the growth function $\mu$ in a sequence of points $\bar
s_k=\bar s_0+k\delta$ (where $\delta>0$ is small). The function values
$\mu(\bar s_k)$ are the roots $\bar D_k$ of $s_\mathrm{eq}(\bar
s_k,\cdot)-\bar s_k$, \js{where $s_\mathrm{eq}(\bar s,\bar D)$ was the
  asymptotic output of the chemostat \eqref{chemostat} with simple
  feedback control \eqref{simple-feedback}, as defined by
  \eqref{eq:seqdef}. The equilibrium value $s_\mathrm{eq}(\bar s,\bar
  D)$ satisfies $\mu(s_\mathrm{eq}(\bar s,\bar
  D))=D-G_1(s_\mathrm{eq}(\bar s,\bar D)-\bar s)$ due to
  \eqref{chemostat} (see also \eqref{eq:seqdef:imp}) for all
  admissible $\bar D$. Differentiating this implicit expression with
  respect to $\bar D$, we obtain
\begin{displaymath}
  \frac{\partial s_\mathrm{eq}}{\partial \bar D}(\bar D,\bar s_k)=
  \frac{1}{G_1+\mu'(s_\mathrm{eq}(\bar D,\bar s_k))}\approx
  \frac{1}{G_1+\frac{\bar D-\bar D_{k-1}}{\bar s_k-\bar s_{k-1}}}\mbox{,}
\end{displaymath}
where we used a secant approximation for $\mu'$ on the right-hand side.}
This leads to the iteration rule
\begin{equation}\label{eq:newton}
  \bar D_\mathrm{new}=\bar D_\mathrm{old}-
  \frac{s_\mathrm{eq}(\bar D_\mathrm{old},\bar s_k)}{%
    G_1+\frac{\bar D_\mathrm{old}-\bar D_{k-1}}{\bar s_k-\bar s_{k-1}}}\mbox{,}
\end{equation}
starting from $\bar D=\bar D_{k-1}$, or (for $k>2$)
\begin{displaymath}
  \bar D=\bar D_{k-1}+
  \frac{\bar D_{k-1}-\bar D_{k-2}}{\bar s_{k-1}-\bar s_{k-2}}\mbox{.}
\end{displaymath}
For the initial step ($k=1$) the derivative of $s_\mathrm{eq}$ has to
be either guessed or approximated with a finite difference (we used
the latter in our numerical simulations).

Note that at no point it is necessary to set the internal states $s$ or $b$
of system~\eqref{chemostat}. Only the reference values $(\bar s,\bar
D)$ have to be set.
\begin{figure}
  \includegraphics[width=0.9\textwidth]{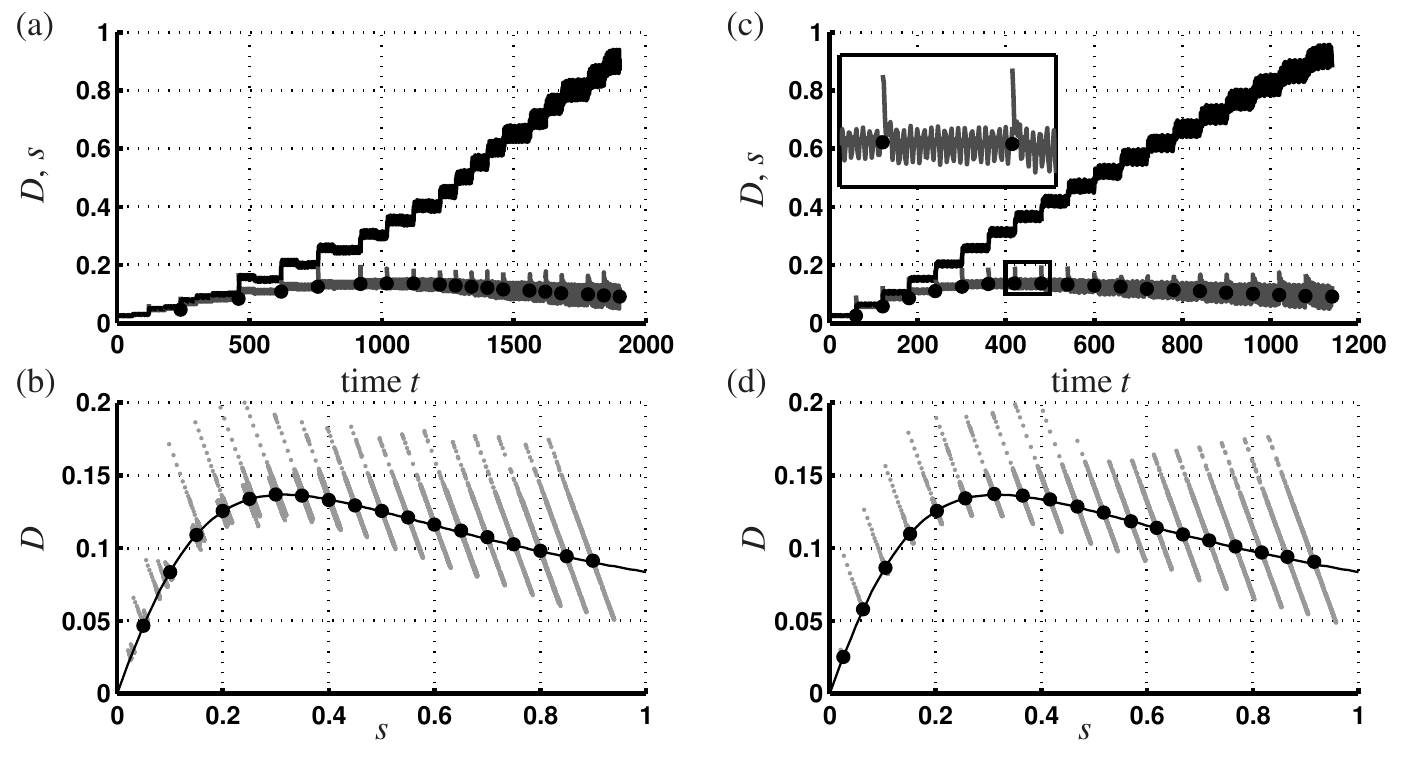}
  \caption{ Simulation of step-wise adaptation. Panels (a) and (b) use
    \eqref{eq:newton} where $\bar s_k=0.05k$. Panels (c) and (d) use
    \eqref{eq:simplecont}--\eqref{eq:simplecontnext} where
    $\delta=0.05$, both cases subject to output
    disturbance~\eqref{eq:disturbance}. Parameters:
    $\mathtt{tol}=10^{-3}$ (for (a) and (b)), $G_1=1$, $D_{\max}=0.2$,
    $D_{\min}=0.02$. A check if the transients have settled was
    performed every $20$ time units. Transients were accepted as
    settled if the standard deviation of $s$ ($\std(s)$) on the last
    interval is no longer smaller than $0.9\std(s)$ on the previous
    interval. The average over the last interval is used as the
    resulting equilibrium.}
  \label{fig:hybrid}
\end{figure}
The panels (a) and (b) of Figure~\ref{fig:hybrid} show the output of a
simulation with the step-wise adaptation using Newton iteration
\eqref{eq:newton}.  Panels (a) shows the time profile of output $s$
and input $D$ throughout the run. Panels (b) shows the evolution in
the $(s,D)$-plane in grey. Black dots indicate when convergence was
reached ($|s_\mathrm{eq}-\bar s_k|<\mathtt{tol}$. These points
correspond to values at which the control was accepted as
non-invasive. Then the iteration moved on to the next $\bar s_k$.  By
gradually tracing out the graph of $\mu$, one achieves small and
rapidly decaying transients in every evaluation of $s_\mathrm{eq}$
(which involves running system~\eqref{chemostat} with control until
transients have settled). This is so because the transients all lie
inside the subspace $\{(s,b): s+b=s_\mathrm{in}\}$ after
system~\eqref{chemostat} has run at least once. Second, the initial
offset from the equilibrium is always small, because the adjustments
of $\bar s$ and $\bar D$ are small.

\subsection{A simplified step-wise scheme}
\label{sec:discsimple}
The scheme~\eqref{eq:newton} permits one to find $\mu(\bar s_k)$ for
an a priori prescribed set of admissible abscissae $\bar s_k$. If one
wants to recover only the graph of $\mu$ one does not need to
prescribe the sequence $\bar s_k$ a priori, thus, avoiding a Newton
iteration. Suppose that we know already two points $p_{k-1}=(\bar
s_{k-1}, \bar D_{k-1})$ and $p_k=(\bar s_k, \bar D_k)$ on the curve
$(s,\mu(s))$. Then we set
\begin{equation}
  \label{eq:simplecont}
  (\bar s_{\mathrm{pred},k+1},\bar D_{\mathrm{pred},k+1})=
  p_k+\delta \frac{p_k-p_{k-1}}{%
    \|p_k-p_{k-1}\|}\mbox{,}
\end{equation}
where $\delta>0$ is the approximate desired distance between points
along the curve $(s,\mu(s))$, and run the controlled experiment with
the reference values $(\bar s,\bar D)=(\bar s_{\mathrm{pred},k+1},\bar
D_{\mathrm{pred},k+1})$ in \eqref{simple-feedback} until the
transients have settled to obtain the next point on the curve
\begin{equation}
  \label{eq:simplecontnext}
  \begin{split}
    \bar s_{k+1}&=s_\mathrm{eq}(\bar s_{\mathrm{pred},k+1},
    \bar D_{\mathrm{pred},k+1})\\
    \bar D_{k+1}&=D(\bar s_{k+1},\bar D_{\mathrm{pred},k+1},
    \bar s_{\mathrm{pred},k+1})\\
    &=\bar D_{\mathrm{pred},k+1}-G_1(\bar s_{k+1}-\bar s_{\mathrm{pred},k+1})
  \end{split}
\end{equation}
This simplified procedure cannot guarantee the identification of $\mu$
at prescribed equidistantly spaced values of $s$ but finds $\mu(\bar
s_k)$ for a (nearly evenly spaced) sequence $\bar s_k$ given by the
intersections of the lines $D=D_{\mathrm{pred},k}-G_1(s-\bar
s_{\mathrm{pred},k})$ with the graph $D=\mu(s)$.

Figure~\ref{fig:hybrid}, panels (c) and (d), demonstrate the speed-up
using the simplified scheme
\eqref{eq:simplecont}--\eqref{eq:simplecontnext} (note the times at
the abscissae). The difference to Figure~\ref{fig:hybrid}(a,b) is that
the values $\bar s_k$ at which the growth function is evaluated are
not exactly equidistantly spaced. \js{The zoom in
  Figure~\ref{fig:hybrid}(c) shows that the control reaches the
  equilibrium up to an error at the level of the disturbance very
  quickly. The black dot shows then the average of the output during
  the remainder of the time before the output gets accepted (thus,
  achieving higher accuracy at the cost of speed).}

\section{The two species case}
\label{sec:twospecies}
Let us now consider an extension of the chemostat model
\eqref{chemostat} that considers two species which compete for the
same substrate. The two-species model can be written as follows
\begin{equation}
\label{chemostat2}
\left\{\begin{array}{lll}
\dot s & = & \ds -\sum_{i=1}^{2}\mu_{i}(s)b_{i}+D(s_\mathrm{in}-s)\\[3mm]
\dot b_{i} & = & \mu_{i}(s)b_{i}-Db_{i} \qquad (i=1,2)
\end{array}\right.
\end{equation}
The two-species model has co-existing equilibria $E^*_i$, which
correspond to the state where species $i$ is present and the other
species $3-i$ is suppressed. The following proposition shows first
that feedback stabilization based on input $D$ and output $s$ breaks
down in general for the equilibrium corresponding to the species with
the smaller growth rate (the \emph{suppressed}, or
\emph{non-dominant}, species). Then we state what eigenvalues the
linearizations at equilibria have for our specific control laws,
\eqref{simple-feedback} and \eqref{dyn-feedback}.
\begin{proposition}
\label{proposition3}
Fix $\bar s \in (0,s_\mathrm{in})$ and consider the equilibrium $E_{2}^{\star}=(\bar s,0,s_\mathrm{in}-\bar s)$.
\begin{enumerate}
\item \label{thm:genfail} \emph{\textbf{(Suppressed equilibrium not
      stabilizable)}} Let $\mu_{1}(\bar s)> \mu_{2}(\bar s)$, and
  $D(\cdot)$ be a feedback $D(\cdot)$ of the form
  \begin{equation}
    D=f(s,\xi)\mbox{,} \quad \dot \xi=g(s,\xi)\mbox{, 
      \qquad( $\xi \in \mathbb{R}^{k}$)}\label{eq:genfb}
\end{equation}
with $f(\bar s,0)=\mu_{2}(\bar s)$ and $g(\bar s,0)=0$. Then the
equilibrium $E_{2}^{\star}$ of system \eqref{chemostat2} with feedback
$D$ is unstable.
\item\label{thm:simpleev} \emph{\textbf{(Eigenvalues for simple
      feedback)}} Using feedback law \eqref{simple-feedback},
  \begin{displaymath}
    D(s,\bar D,\bar s)=\sat_{[D_{\min},D_{\max}]} \left(\bar D -
      G_{1}(s-\bar s)\right)
  \end{displaymath}
  with $D_{\min}$ and $D_{\max}$ such that
  $D_{\min}<\mu_2(\bar s)<D_{\max}$, the linearization of system
  \eqref{chemostat} in the equilibrium $E_2^\star$ has the eigenvalues
  $-\mu_2(\bar s)$, $\mu_1(\bar s)-\mu_2(\bar s)$ and $-(\mu'_2(\bar
  s)+G_1)(s_\mathrm{in}-\bar s)$.
\item \label{thm:dynev} \emph{\textbf{(Dominant equilibrium stabilized
      by dynamic feedback)}} If $\mu_{1}(\bar s)< \mu_{2}(\bar s)$,
  then the feedback \eqref{dyn-feedback} exponentially stabilizes the
  system \eqref{chemostat2} locally about $E_{2}^{\star}$, for any
  positive constants $(G_{1},G_{2})$ such that $G_{1}>
  -\mu_{2}^{\prime}(\bar s)$.  Furthermore one has
\[
\lim_{t\to+\infty}\bar D(t)=\mu_{2}(\bar s)
\]
\end{enumerate}
\end{proposition}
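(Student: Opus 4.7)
My plan is to reduce all three parts to a single Jacobian computation at $E_2^\star$ expressed in stoichiometric coordinates. Setting $z = s + b_1 + b_2$ and keeping $(z, s, b_1)$ (plus $\bar D$ or $\xi$ where relevant), I would use $\dot z = D(s_\mathrm{in} - z)$ to decouple the $z$-direction cleanly, exactly as in the proof of Proposition~\ref{proposition2}. At $E_2^\star$ one has $z = s_\mathrm{in}$, $s = \bar s$, $b_1 = 0$, and in parts~\ref{thm:simpleev}--\ref{thm:dynev} also $\bar D = \mu_2(\bar s)$; the assumption $D_\mathrm{min} < \mu_2(\bar s) < D_\mathrm{max}$ places this strictly inside the saturation interval so that $D$ is locally smooth.

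For part~\ref{thm:genfail} I would compute only the $b_1$-row of the Jacobian of the closed loop. Since $\dot b_1 = (\mu_1(s) - f(s, \xi)) b_1$ and $b_1 = 0$ at $E_2^\star$, every partial derivative of this expression with respect to $s$, $b_2$, or $\xi$ carries a factor $b_1$ and therefore vanishes, leaving only the diagonal entry $\mu_1(\bar s) - f(\bar s, 0) = \mu_1(\bar s) - \mu_2(\bar s) > 0$. This yields $\mu_1(\bar s) - \mu_2(\bar s)$ as an eigenvalue of the linearisation, proving instability regardless of the specific form of $f$ and $g$.

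For part~\ref{thm:simpleev} the same decoupling makes the Jacobian in $(z, s, b_1)$-coordinates essentially block-triangular: the $z$-row contains only the diagonal entry $-\mu_2(\bar s)$, the $b_1$-row only the diagonal entry $\mu_1(\bar s) - \mu_2(\bar s)$, and $\partial \dot s / \partial s$ evaluates to $-(\mu_2'(\bar s) + G_1)(s_\mathrm{in} - \bar s)$, so the three claimed eigenvalues can be read off immediately. For part~\ref{thm:dynev} I would append $\bar D$ as a fourth coordinate. The $z$- and $b_1$-rows still decouple and contribute eigenvalues $-\mu_2(\bar s) < 0$ and $\mu_1(\bar s) - \mu_2(\bar s) < 0$ (the latter now negative by hypothesis). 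The remaining $2\times 2$ block in the $(s, \bar D)$-variables coincides with the matrix $M$ from the proof of Proposition~\ref{proposition2},
\[
M = \begin{pmatrix} -(\mu_2'(\bar s) + G_1)(s_\mathrm{in} - \bar s) & s_\mathrm{in} - \bar s \\ -G_2 (\mu_2(\bar s) - D_\mathrm{min})(D_\mathrm{max} - \mu_2(\bar s)) & 0 \end{pmatrix},
\]
whose trace is negative under $G_1 > -\mu_2'(\bar s)$ and whose determinant is positive, so both remaining eigenvalues have negative real part. Exponential stability follows, and $\bar D(t) \to \mu_2(\bar s)$ because that is the only value of $\bar D$ consistent with $E_2^\star$.

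The main obstacle is pure bookkeeping: carefully computing the mixed partials at $E_2^\star$ and checking that the saturation in \eqref{simple-feedback}--\eqref{dyn-feedback} is inactive in a neighbourhood, so the smooth linearisation above is valid. The strict inequality $D_\mathrm{min} < \mu_2(\bar s) < D_\mathrm{max}$ guarantees this. No deeper analytical difficulty is expected, since all the structural ingredients are already present in Propositions~\ref{proposition1} and~\ref{proposition2}.
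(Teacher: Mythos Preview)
Your proposal is correct and follows essentially the same route as the paper: a Jacobian computation at $E_2^\star$ in stoichiometric coordinates, exploiting that the $z$- and $b_1$-rows decouple (because $z=s_\mathrm{in}$ and $b_1=0$ at equilibrium) to read off the uncontrollable eigenvalues, and then reducing part~\ref{thm:dynev} to the same $2\times2$ block $M$ as in Proposition~\ref{proposition2}. The only cosmetic difference is that the paper retains $(z,b_1,b_2)$ whereas you retain $(z,s,b_1)$; since $b_2=z-s-b_1$ this is an invertible linear change and the block-triangular structure and eigenvalues are identical.
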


\begin{proof}
  Consider the dynamics of the two-species model \eqref{chemostat2}
  with feedback $D$ given by \eqref{eq:genfb}
\[
\left\{\begin{array}{lll}
\dot s & = & \ds -\sum_{i=1}^{2}\mu_{i}(s)b_{i}+f(s,\xi)(s_\mathrm{in}-s)\\[4mm]
\dot b_{i} & = & \mu_{i}(s)b_{i}-f(s,\xi)b_{i} \qquad (i=1,2)\\
\dot \xi & = & g(s,\xi)\mbox{.}
\end{array}\right.
\]
We write this system in $(z,b_{1},b_{2},\xi)$ coordinates with
$z=s+b_{1}+b_{2}$:
\[
\left\{\begin{array}{lll}
\dot z & = & f(z-b_{1}-b_{2},\xi)(s_\mathrm{in}-z)\\
\dot b_{i} & = & (\mu_{i}(z-b_{1}-b_{2})-f(z-b_{1}-b_{2},\xi))b_{i}\\
\dot \xi  & = & g(z-b_{1}-b_{2},\xi)\mbox{.}
\end{array}\right.
\]

Point~\ref{thm:genfail}: at equilibrium $E_{2}^{\star}$, the Jacobian
matrix $J_2^{\star}$ possesses the following form in
$(z,b_{1},b_{2},\xi)$ coordinates
\begin{equation}\label{eq:gen2sev}
  J_2^\star=\left(\begin{array}{cccc}
      -\mu_{2}(\bar s) & 0 & 0 & 0\\
      0 & \mu_{1}(\bar s)-\mu_{2}(\bar s) & 0 & 0\\
      \star & \star & \star & \star\\
      \star & \star & \star & \star
\end{array}\right)\mbox{,}
\end{equation}
which has the positive eigenvalue $\mu_{1}(\bar s)-\mu_{2}(\bar s)$. This proves that $E^*_2$ is unstable whatever the choice of the feedback $D(\cdot)$.\\

\noindent
Point~\ref{thm:simpleev}: we can be more specific about the form of
$J_2^\star$ for the simple feedback law~\eqref{simple-feedback}. Since
$D_{\min}<\mu_2(\bar s)<D_{\max}$, the feedback is in its linear
regime, such that $\partial_sD=-G_1$. Thus, (component $\xi$ is
absent)
\begin{displaymath}
  J_2^\star=
  \begin{pmatrix}
    -\mu_{2}(\bar s) & 0 & 0 \\
    0 & \mu_{1}(\bar s)-\mu_{2}(\bar s) & 0\\
    \star &\star & -(\mu'_2(\bar
    s)+G_1)(s_\mathrm{in}-\bar s)
  \end{pmatrix}
\end{displaymath}

\noindent
Point~\ref{thm:dynev}: for feedback law \eqref{dyn-feedback}
$J_2^\star$ can be written as follows, in $(z,b_{1},b_{2},\bar D)$
coordinates
\begin{displaymath}
J_2^\star=
\begin{pmatrix}
  \begin{matrix}
    -\mu_{2}(\bar s) & 0 \\
    0 &\mu_{1}(\bar s)-\mu_{2}(\bar s)\\ 
    \star &\star    
  \end{matrix}
  \quad&  
  \begin{matrix}
    \begin{matrix}
      0&0\\ 0&0
    \end{matrix}\\
    M
\end{matrix}
\end{pmatrix}\mbox{,}
\end{displaymath}
where $M$ is a $2\times2$ matrix with the entries
\[
M=
\begin{pmatrix}
  -(\mu_{2}^{\prime}(\bar s)+G_{1})(s_\mathrm{in}-\bar s) &\quad& 
  s_\mathrm{in}-\bar s\\
  -G_{2}(\mu_{2}(\bar s)-D_{\min})(D_{\max}-\mu_{2}(\bar s)) &\quad& 0
\end{pmatrix}
\]
Its eigenvalues are $\lambda_{1}=-\mu_{2}(\bar s)<0$, $\lambda_{2}=\mu_{1}(\bar s)-\mu_{2}(\bar s)<0$, $\lambda_{3}$ and $\lambda_{4}$ with
\[
\begin{array}{lll}
\lambda_{3}\lambda_{4} & = & G_{2}(\mu_{2}(\bar s)-D_{\min})(D_{\max}-\mu_{2}(\bar s))(s_\mathrm{in}-\bar s )\\
\lambda_{3} + \lambda_{4} & = & -(\mu_{2}^{\prime}(\bar s)+G_{1})(s_\mathrm{in}-\bar s)
\end{array}
\]
As in the proof of Proposition \ref{proposition2}, one concludes the exponential
stability of $E^{\star}_{2}$ when $G_{2}>0$ and
$G_{1}>-\mu_{2}^{\prime}(\bar s)$, and the convergence of $D(\cdot)$
toward $\mu_{2}(\bar s)$.\qed
\end{proof}

Consequently, the adaptive control scheme proposed in Section
\ref{section-adaptive} only allows one to reconstruct the larger of
the two growth rates at any given $s$ by stabilizing the equilibrium. 

Nevertheless, the introduction of feedback control may still be of
help. To be specific, let us assume that species $1$ is dominant for
$s<s_c$ (and species $2$ is suppressed there), and species $2$ is
dominant for $s>s_c$, where $s_c$ is a cross-over point:
$\mu_1(s)>\mu_2(s)$ for $s<s_c$ and $\mu_1(s)<\mu_2(s)$ for $s>s_c$
(see the underlying function graphs in Figure~\ref{simu2species} for a
typical picture of the discussed scenario, $s_c=0.5$ in
Fig.~\ref{simu2species}). Suppose we are interested in the location of
$E_2^\star$ to identify $\mu_2(\bar s)$ for $\bar s<s_c$. As the form
of the Jacobian $J_2^\star$ in \eqref{eq:gen2sev} makes clear, two
eigenvalues of $J_2^*$ are unaffected by our feedback control. One of
them, $-\mu_2(\bar s)$ is always stable. It corresponds to the
transversally stable direction of the invariant subspace
$T=\{(s,b_1,b_2):s+b_1+b_2=s_\mathrm{in}\}$. Once, the system is in
$T$, control will not move it out of $T$, thus, we can ignore this
eigenvalue.
  
The other uncontrollable eigenvalue, $\mu_1(\bar s)-\mu_2(\bar s)$, is
unstable for $\bar s<s_c$, but stable for $\bar s>s_c$. Consequently,
the following strategy would make it possible in principle to identify
$\mu_2$ for $\bar s<s_c$: keep the system in the region where $s>s_c$
for some time to suppress species $1$ (which will exponentially decay
for $s>s_c$ according to Proposition~\ref{proposition3}, point
\ref{thm:simpleev}). When one is sufficiently close to the invariant
line $L_2=\{(s,b_1,b_2): b_1=0, s+b_2=s_\mathrm{in}\}$, one is
(nearly) in the single-species case, where one can then use the
methods of Sections~\ref{section-adaptive}, \ref{section_continuation}
or \ref{sec:hybrid} to explore $\mu_2$ for a finite time for $s<s_c$
until species $1$ has recovered. This approach is also possible
without control if both growth functions are monotone.

Figure~\ref{simu2species}(b) demonstrates the application of feedback
law \eqref{dyn-feedback} in combination with \eqref{eq:sdrift}, which
defines the adaptive control presented in Section
\ref{section_continuation}, to the two-species situation with the
monotonic Monod functions 
\begin{equation}
  \label{eq:mutwo}
  \mu_{1}(s)=\frac{s}{0.1+s}, \qquad \mu_{2}(s)=1.5\frac{s}{0.4+s}
\end{equation}
as growth rates.

If one treats $\bar s$ as a parameter then the equilibria $E^*_1$ and
$E^*_2$ undergo an exchange of stability (a degenerate transcritical
bifurcation) at $\bar s=s_c$. As the adaptation rule \eqref{eq:sdrift}
lets $\bar s$ drift slowly (with speed $\varepsilon$) the full system
exhibits a phenomenon known as delayed loss of stability
\cite{BS99,LRS09} in the context of dynamic bifurcations \cite{B91},
widely studied in slow-fast systems \cite{O91}. \js{Say, we are
  decreasing $\bar s$ slowly from above $s_c$ to below $s_c$ (as in
  Figure~\ref{simu2species}(a)). Then concentration $b_1$ decreases
  exponentially, coming close to $0$ while $\bar s>s_c$.} After $\bar
s$ has crossed $s_c$, the concentration $b_1$ grows exponentially, but
still takes some time until it reaches values noticeably different
from $0$. The value $\bar s_\mathrm{loss}$ of $\bar s$ at which $b_1$
becomes noticeably non-zero is in the ideal ODE model independent of
the drift speed $\varepsilon$ of $\bar
s$. Figure~\ref{simu2species}(a) shows this effect: since $b_1$ is
nearly zero the variable $D$ continues to follow the, by now unstable,
drifting equilibrium $E^*_2$. \js{For a given $\bar s>s_c$ and an
  arbitrary small $b_{1\mathrm{ini}}>0$ at time $0$, the value $\bar
  s_\mathrm{loss}<s_c$ at which $b_1$ reaches $b_{1,\mathrm{ini}}$
  again when following \eqref{eq:sdrift}, is given implicitly by
  the relation }
% One can explicity determine the
%   ``input-output'' map $\bar s \mapsto \bar s_\mathrm{loss}$ for which
%   one has $s(t)\simeq \bar s$ and $s(t+T)\simeq \bar s_\mathrm{loss}$
%   at times $t$ and $t+T$ such that $b_{1}(t)\simeq b_{1}(t+T)$:
\[
\int_{\bar s}^{\bar s_\mathrm{loss}} (\mu_1(\sigma)-\mu_2(\sigma))/(\sigma(s_{in}-\sigma))d\sigma=0
\]
in the limit of small $\epsilon$.  This delay mechanism allows one in
principle a reconstruction of a part of the smaller growth rate close
to the bifurcation value of $\bar s$.

\emph{Remark 1}: From a practical view point, an apparent jump between
the two graphs (see evolution curves in black in
Fig.~\ref{simu2species}) could indicate the presence of another
species, if one believes that the culture in chemostat was initially
pure. Nevertheless, one can still rely on the reconstruction of parts
of growth curves for each species.

\begin{figure}[ht]
\includegraphics[width=6cm,height=3cm]{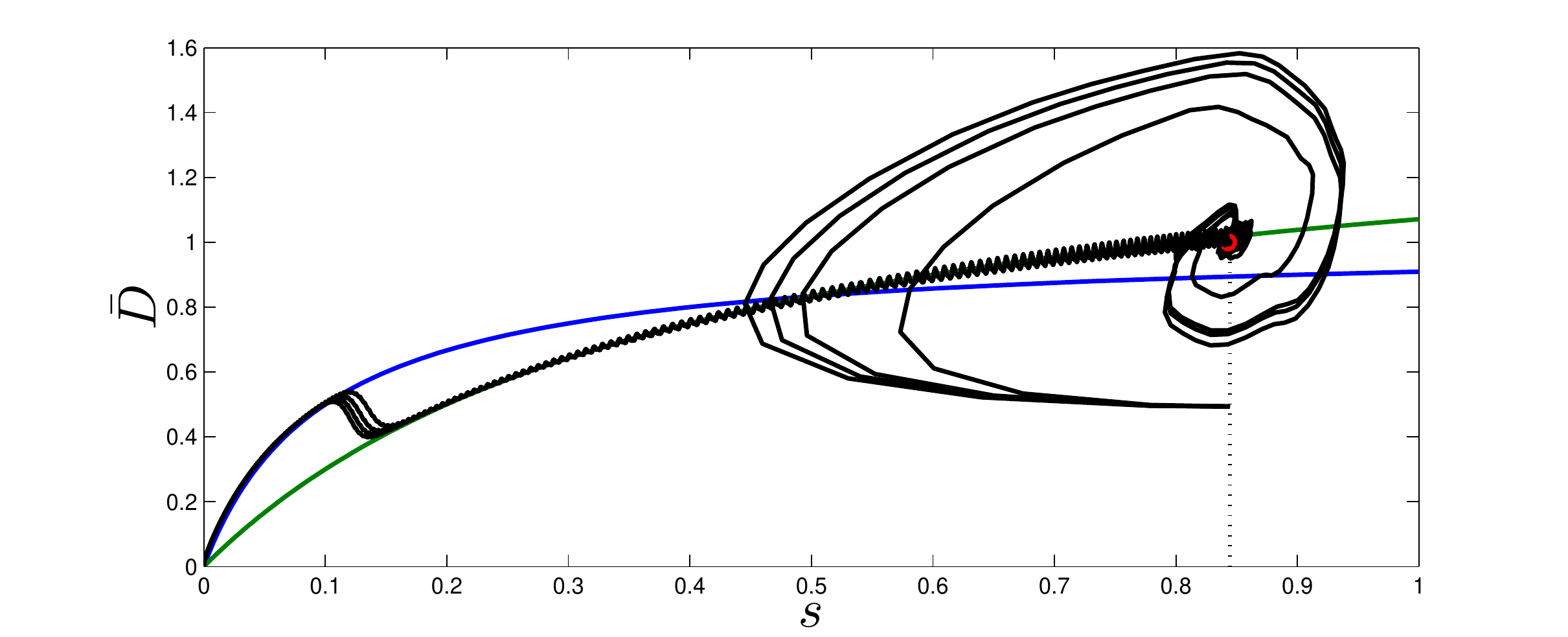}
\includegraphics[width=6cm,height=3cm]{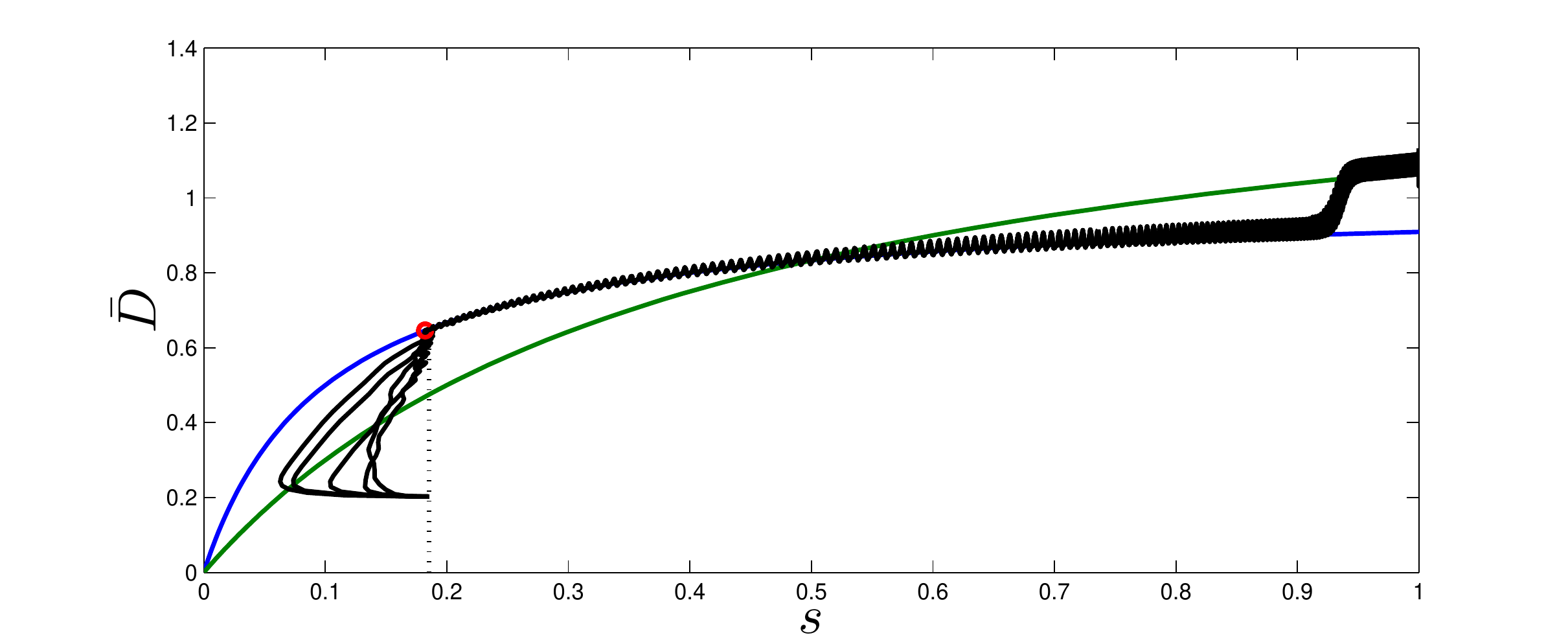}
\caption{\label{simu2species} Exploiting the delayed loss of stability
  for the dynamic control feedback law \eqref{dyn-feedback} in the two
  species case with growth rates given in \eqref{eq:mutwo} (shown
  underlying in both panels). The reference value $\bar s$ is
  \js{decreasing} on the left panel, and \js{increasing} on the right
  one. Parameters: \js{$\epsilon=-0.01$} for panel (a) and \js{$\epsilon=0.01$}
  for panel (b) in \eqref{eq:sdrift}, $G_1=2$, $G_2=2$ in
  \eqref{dyn-feedback}, output disturbance defined in
  \eqref{eq:disturbance}.}
\end{figure}
\begin{figure}
\includegraphics[width=\textwidth]{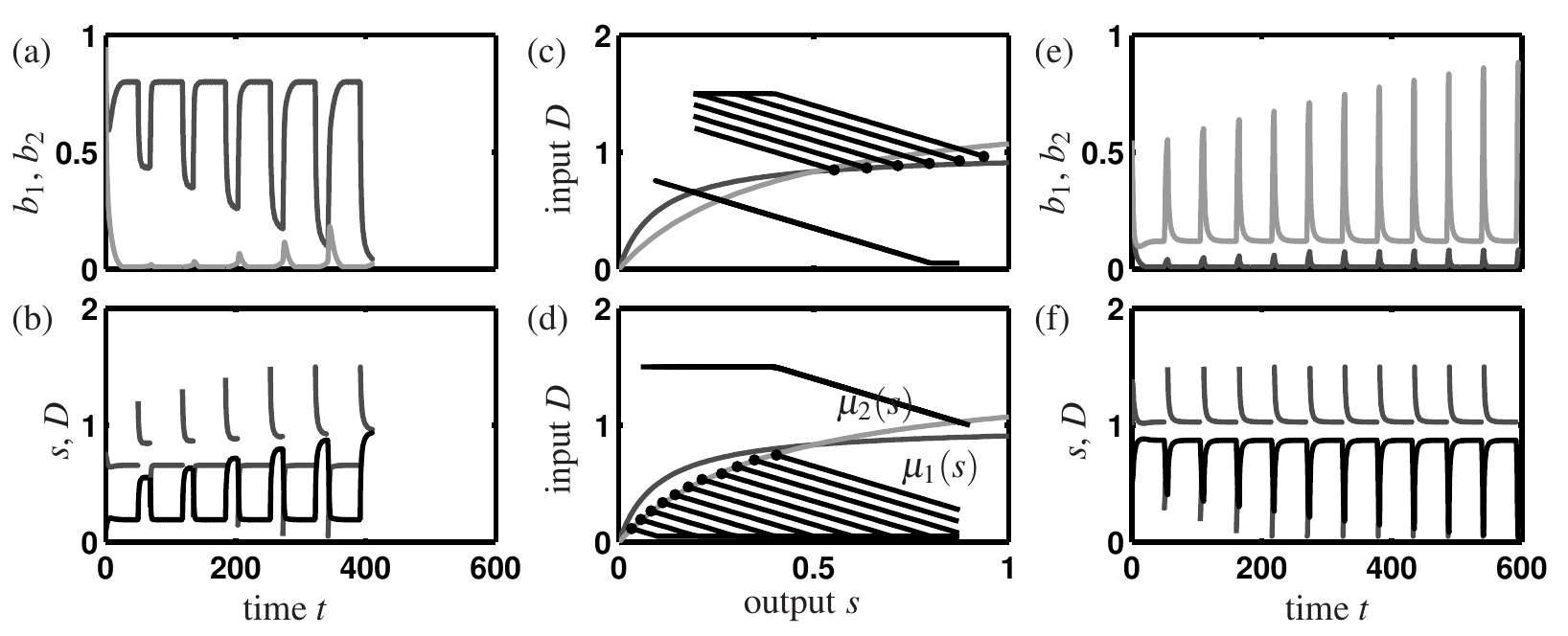}
\caption{\label{discrete2species} 
  Jumping back and forth between two regions at discrete times
  exploiting the delayed loss of stability for the simple control
  feedback law \eqref{simple-feedback} in the two species case with
  growth rates given in \eqref{eq:mutwo} (shown underlying in both
  panels (c) and (d)).}
\end{figure}
\begin{figure}[ht]
  \centering
  \includegraphics[width=0.6\textwidth]{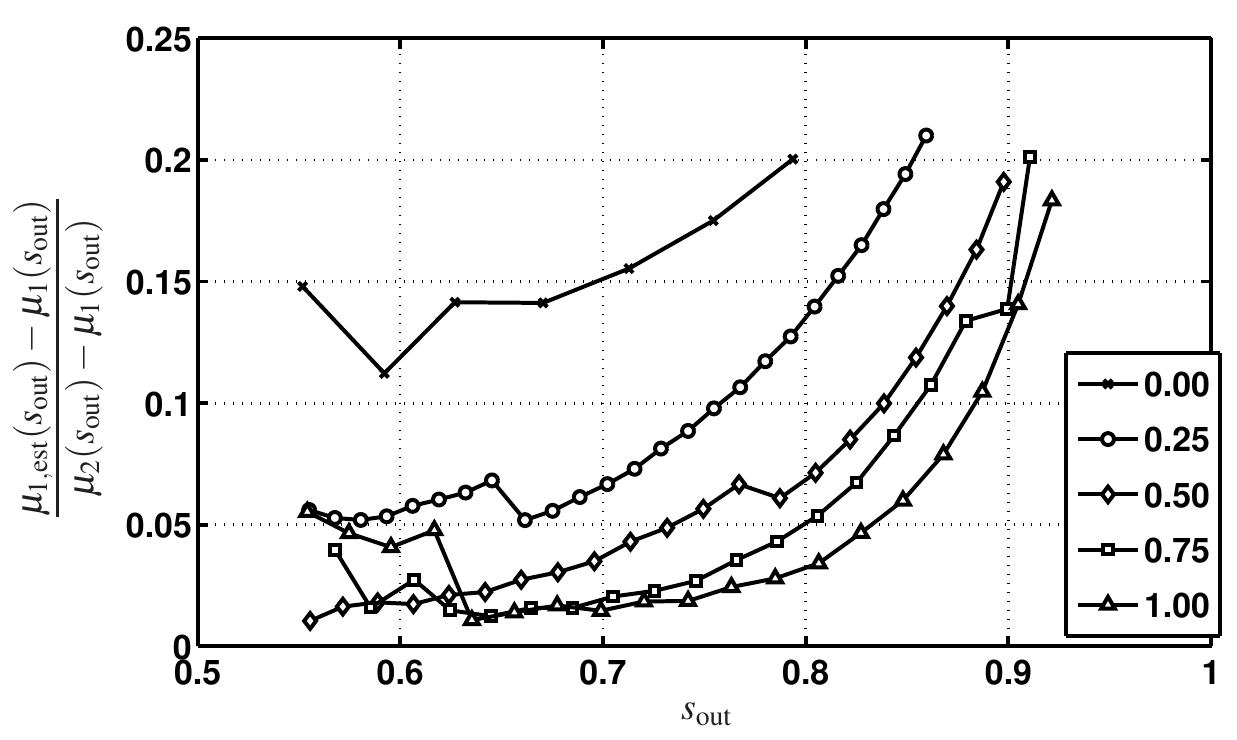}  
  \caption{ Effect of
    feedback control \eqref{simple-feedback} on accuracy of identified
    growth rate $\mu_2$. Parameters are the same as for
    Figure~\ref{discrete2species}(a-c) (in particular,
    $b_{\min}=10^{-3}$), the gain $G_1$ is shown in the
    legend. Parameters: $G_1=1$, $D_{\max}=1.5$, $D_{\min}=0.1$ in
    \eqref{simple-feedback}, $b_{\min}=10^{-3}$, output disturbance
    defined in \eqref{eq:disturbance}.}
  \label{fig:gains2species}
\end{figure}
\emph{Remark 2}: In the idealized ODE model one could in principle
recover the entire suppressed part of the growth rate by spending more
time initially in the dominant part. This is so, because the
concentration of the suppressed species (say, $b_1$ for $s>s_c$ in
Fig.~\ref{simu2species}(b)) can be made arbitrarily small by spending
more time with $\bar s>s_c$. In practice, the suppression of species
may not be perfect. For example, it may be impossible to suppress
either species below a concentration $b_{\min}>0$. Then this
concentration $b_{\min}$ determines for how long the system will stay
close to the invariant plane $\{b_1=0\}$, when this plane is
unstable. Thus, $b_{\min}$ determines how close one can get to the
unstable equilibrium $E^\star_2$ for $\bar s<s_c$ (together with the
difference in growth rates, $\mu_1(\bar s)-\mu_2(\bar s)$, which
determines how unstable the plane $\{b_1=0\}$ is in $\bar s$). This is
where the feedback control \eqref{simple-feedback},
$D=\sat_{[D_{\min},D_{\max}]} \left(\bar D - G_{1}(s-\bar s)\right)$,
has an effect: the unstable equilibrium $E^\star_2$ has stronger
attraction along the invariant line $\{(s,b_1,b_2): b_1=0,
s+b_1+b_2=s_\mathrm{in}\}$, because the third eigenvalue of
$J_2^\star$, $-(\mu'_2(\bar s)+G_1)(s_\mathrm{in}-\bar s)$, can be
made more negative by increasing $G_1$.

Figure~\ref{discrete2species} demonstrates that it is possible in
principle to identify the growth rates of species in ranges of $s$
where they are suppressed \js{(even for positive
  $b_\mathrm{min}=10^{-3}$)}, if the species is dominant in another
region. The procedure was as follows (for
Fig.~\ref{discrete2species}(a-c)):
\begin{enumerate}
\item Set $(\bar s,\bar D)$ to \texttt{(0.1,0.75)}, and wait until
  transients have settled (implying that species $2$ is
  suppressed). The output $s$ settles to a value less than $s_c=0.5$.
\item Then set $(\bar s,\bar D)$ to, say,
  \texttt{(0.9,0.75)}. One expects a transient that initially
  follows the invariant line $\{b_2=0, b_1=s_\mathrm{in}-s\}$ where
  $s$ initially increases.
\item As soon as $s$ stops increasing (let's say, at
  $s_\mathrm{out}$), we know that the system now moves away from the
  plane $\{b_2=0\}$. So, we read off $D$, which is the estimate
  $\mu_{1,\mathrm{est}}(s_\mathrm{out})$ (a black dot in
  Fig.~\ref{discrete2species}(c)), and go back to step 1.
\end{enumerate}
In Figure~\ref{discrete2species}(a-c) switching occurs between $(\bar
s,\bar D)=(\mathtt{0.1,0.75})$ (where species 2 is stable) and $(\bar
s,\bar D)=(\mathtt{0.4:0.1:1,0.9})$ (reading off $\mu_2$). 

Figure~\ref{discrete2species}(d-f) demonstrates the same procedure for
identifying $\mu_2$ for $s<s_c$. The only difference is that we read
off $D=\mu_{2,\mathrm{est}}(s_\mathrm{out})$ at an inflection point
$s_\mathrm{out}$ of $s(t)$. In Figure~\ref{discrete2species}(d-f)
switching occurs between $(\bar s,\bar D)=(\mathtt{0.9,1})$ (where
species $2$ is dominant) and $(\bar s,\bar
D)=(\mathtt{1:-0.1:0,0.15})$ (reading off $\mu_2$ in the region where
species $2$ is suppressed).

The procedure, with its steps 1--3 is also possible without feedback
control \eqref{simple-feedback}. However, feedback control
\eqref{simple-feedback} increases the decay rate of the equilibrium
with respect to disturbances, \js{for example, within the invariant line
$\{b_2=0, b_1=s_\mathrm{in}-s\}$ in
Figure~\ref{discrete2species}(a-c)}. Thus, the system trajectory will
come closer to the equilibrium before it diverges from the invariant
line. Figure~\ref{fig:gains2species} demonstrates the effect of
including the feedback term \eqref{simple-feedback} if suppression of
the unwanted species $2$ is imperfect ($b_{\min}=10^{-3}$). The
imperfect suppression is mimicked in our simulations by increasing
$b_i$ ($i=1,2$) to $10^{-3}$ after each integration step if its value
fell below $10^{-3}$ in this step. \js{The curves show the error
  relative to the difference between $\mu_1$ and $\mu_2$. For $G_1=0$
  (no feedback control) $\bar D$ in \eqref{simple-feedback} was varied
  to obtain different points approximating $\mu_1$, otherwise $\bar s$
  in \eqref{simple-feedback} was varied ($\bar s$ has no effect if
  $G_1=0$).}

The two approaches in Fig.~\ref{simu2species} and
Fig.~\ref{discrete2species} correspond to two different choices for
the trade-off between speed and accuracy. While the dynamic feedback
in Fig.~\ref{simu2species} requires only a single run, the procedure
of switching back and forth between regions as rapidly as possible is
able to obtain the growth rate of the suppressed species for abscissae
$s$ more distant from $s_c$.

\section{Conclusion, discussion and outlook}

In this work, we have presented a framework for the functional
identification of \ar{a large class} of non-monotonic growth functions in the chemostat. The
proposed methods achieve identification by tracing out branches of
equilibria also through their unstable parts. At the core of the
method is the observation that the introduction of a stabilizing
feedback loop transforms the problem of finding equilibria of the
original uncontrolled (open-loop) system to a root-finding problem,
which can then be solved using either continuous and step-wise
variants of classical numerical continuation algorithms
\cite{AG03}. Numerical simulations illustrate the potential of the
method \ar{on the Haldane function.} 

\js{An important issue in practice is how long it would typically take
  to identify the entire growth function in a real experiment. In our
  simple model \eqref{chemostat} with idealized feedback
  \eqref{simple-feedback} the control gain $G_1$ can be chosen
  arbitrarily large such that the identification could be sped up
  arbitrarily. In practice several effects place a limit on our choice
  of gain $G_1$, such as output and state disturbances, and low
  sampling frequency for measurement and input. Furthermore, both
  approaches (sections \ref{section_continuation} and
  \ref{sec:hybrid}) have a parameter controlling the trade-off between
  speed of the process and the accuracy of the results: the tracing
  speed $\epsilon$ of $\bar s$ in section~\ref{section_continuation}
  and the step-size $\delta$ in section~\ref{sec:hybrid} (larger steps
  result in a coarse mesh on which the growth function is determined).
  Further investigations are required to find out how these parameters
  have to be chosen in real experiments.}

The approach is more general than the case we have presented here for
the chemostat model.  We use the chemostat as a conceptually simple
example that is still of practical interest.

Another application we plan to explore in the future are regulation
problems. For example, one can regulate the single-species chemostat
to operate at the substrate concentration $s$ at which the growth rate
$\mu$ is maximal by following the same recipe. 
This approach to regulation, which is similar in spirit to the act-and-wait
technique for delay compensation \cite{I06}, does not require an
a priori identification of the growth rate $\mu$, and leads to a
different algorithm than the methods discussed in the literature
\cite{DPG12,GDM04}).

%%%%%%%%%%%%%%%%%%%%%%%%%%%%%%%%%%%%%%%%%%%%%%%%%%%%%%%%%%%%%%%%%%%%%%%%%%%%%%%%

%\begin{acknowledgements}
%\end{acknowledgements}

%%%%%%%%%%%%%%%%%%%%%%%%%%%%%%%%%%%%%%%%%%%%%%%%%%%%%%%%%%%%%%%%%%%%%%%%%%%%%%%%

\end{document}